\documentclass[11pt]{article}
\usepackage[margin=1.1in]{geometry}
\usepackage{amsmath,amssymb,amsthm}
\usepackage{booktabs}
\usepackage[hidelinks]{hyperref}
\hypersetup{pdftitle={Straight-line programs for the solutions of Pell's equation},
pdfauthor={Bogdan Dumitru and Mihai Prunescu}}
\newtheorem{theorem}{Theorem}
\newtheorem{lemma}[theorem]{Lemma}
\newtheorem{proposition}[theorem]{Proposition}

\theoremstyle{remark}

\newcommand{\N}{\mathbb N}
\newcommand{\Z}{\mathbb Z}

\newcommand{\eps}{\varepsilon}
\newcommand{\tsub}{\mathbin{\dot{-}}}
\newcommand{\md}{\mathbin{\mathrm{mod}}}
\newcommand{\fl}[1]{\left\lfloor #1\right\rfloor}
\newcommand{\HW}{\operatorname{HW}}

\newcommand{\Gpair}{\mathsf{G}_{02}}
\newcommand{\Gtriple}{\mathsf{G}_{024}}
\newcommand{\Recover}{\mathsf{R}}
\newcommand{\Coord}{\mathsf{C}}
\newcommand{\Signed}{\mathsf{S}}
\newcommand{\Squared}{\mathsf{T}}

\begin{document}
\title{Straight-line programs for the solutions of Pell's equation}
\author{Bogdan Dumitru\footnote{University of Bucharest, Academiei 14,
Bucharest (RO-010014), Romania. Simion Stoilow Institute of Mathematics
of the Romanian Academy, Research unit 5, P. O. Box 1-764,
Bucharest (RO-014700), Romania.}
\and Mihai Prunescu\footnote{Research Center for Logic, Optimization and
Security (LOS), Faculty of Mathematics and Computer Science, University of
Bucharest, Academiei 14, Bucharest (RO-010014), Romania.}\footnote{Simion
Stoilow Institute of Mathematics of the Romanian Academy, Research unit 5,
P. O. Box 1-764, Bucharest (RO-014700), Romania.
E-mail: \texttt{mihai.prunescu@imar.ro}.}}
\date{}
\maketitle

\begin{abstract}
For nonsquare $d\ge2$, we construct fixed straight-line programs for
the least non-trivial solution $(X_1,Y_1)$ of $x^2-dy^2=1$, using addition,
truncated subtraction, multiplication, integer division, exponentiation,
and remainder. A geometric-sum identity recovers $(X_1,Y_1)$ from the
coordinate sums of an initial segment of Pell solutions, without knowing
how many solutions were summed. Weighted binary encodings make these sums
accessible to arithmetic computation. Counting operations with reuse of
computed values, one program uses $94$ operations, with intermediate
bit lengths bounded by $2^{2^{2^{O(d)}}}$. Four additional operations
give a $98$-operation program with the bound $2^{2^{O(d)}}$.
Hua's bound gives corresponding programs with $103$ and $107$ operations
and replaces $O(d)$ by $O(\sqrt d\log d)$ in these size bounds.
Once the fundamental solution is known, generating-function formulas
compute the $n$-th positive solution in $21$ further operations by
extracting its coordinates as base-$b$ digits. We prove that
$2X_1(X_1+1)-1$ is the least integer base for which both formulas
are defined and correct for every $n\ge1$.
\end{abstract}

\medskip
\textbf{Keywords:} Pell's equation, straight-line program, arithmetic term, Hamming weight,
generalized geometric progression.\par
\textbf{2020 Mathematics Subject Classification:} 11A25, 03D20.

\section{Introduction}\label{sec:intro}

Let $d\ge2$ be a nonsquare integer. The fundamental solution of
\[
  x^2-dy^2=1
\]
is the solution $(X_1,Y_1)$ with $X_1,Y_1>0$ and $X_1$ least.
Writing $\eps=X_1+Y_1\sqrt d$, the nonnegative solutions are
\[
  X_n+Y_n\sqrt d=\eps^n,\qquad n\ge0,
  \qquad (X_0,Y_0)=(1,0).
\]
Continued fractions find $(X_1,Y_1)$ by an iteration whose length depends
on $d$. We seek fixed finite arithmetic expressions in $d$ for this pair
and in $d,n$ for the other solutions. The problem is to give explicit
constructions with a small number of operations and to determine how
large their intermediate integers become.

Our first result recovers $(X_1,Y_1)$ from sums that do not identify any
individual solution. Choose $K>X_1$ and sum the
coordinates of all solutions in $0\le x,y<K$:
\[
  A=\sum_{n=0}^N X_n,\qquad B=\sum_{n=0}^N Y_n,
  \qquad X_N<K\le X_{N+1}.
\]
Although the endpoint $N$ is unknown, Theorem~\ref{lem:identity} gives
\[
  \eps=\frac{c+B\sqrt d}{c-B\sqrt d},
  \qquad c=A^2-A-dB^2.
\]
The ratio of the geometric sum $A+B\sqrt d$ to its conjugate gives
$\eps^N$, while removing the constant term from both sums gives
$\eps^{N+1}$.
Taking their quotient eliminates $N$. Thus a method that computes sums
over a whole square can recover its least positive Pell solution without
first isolating that solution. Section~\ref{sec:reconstruction} also
shows that either coordinate sum determines the other.

A second result concerns all the positive solutions once $(X_1,Y_1)$
is known. The generating-function formulas of Prunescu and
Sauras-Altuzarra \cite{PSA25} extract $X_n,Y_n$ as digits in a sufficiently
large integer base. Theorem~\ref{thm:general-base} determines the least
base that works for both coordinates at every $n\ge1$:
\[
  b_0=2X_1(X_1+1)-1.
\]
The obstruction already occurs at $n=1$. At a smaller base with positive
denominator, the two extraction errors cannot both disappear on reduction
modulo the base, because $X_1$ and $Y_1$ are coprime.
We prove this sharp bound in the final section and use it to construct
the general solution in $21$ operations after $(X_1,Y_1)$ is available.

An \emph{arithmetic term} is a finite composition, on
$\N=\{0,1,2,\ldots\}$, of
\[
  u+v,\qquad u\tsub v=\max(u-v,0),\qquad uv,
  \qquad \fl{u/v},\qquad u^v,
\]
with positive divisors. We also use
$u\md v=u\tsub v\fl{u/v}$.
Our counting convention charges one operation for remainder, as well as
one for each of the five displayed operations. Thus the reported counts
refer to this six-operation basis. A \emph{straight-line program} is a
finite sequence of assignments using these operations, with no loop or
input-dependent branch. Constants, assignments, and reuse of previously
computed values have no cost. All counts below measure the size of these
programs, or equivalently arithmetic circuits with shared intermediate
values. Substituting every assignment into its later uses produces
arithmetic terms, but repeats subexpressions and can substantially
increase their size as trees.

Existence of arithmetic terms for the Pell coordinates already follows
from Mazzanti's theorem \cite{Mazzanti}, which represents every
Kalm\'ar-elementary function in the five-operation basis above.
Indeed, the bound $X_1<64^d$ proved in
Proposition~\ref{prop:elementary} makes the fundamental solution a bounded
search for the least positive pair satisfying $x^2=1+dy^2$.
Extending the coordinates by zero on square inputs gives elementary
functions on $\N$. A bounded search below $(2\cdot64^d)^n$ does the same
for the $n$-th solution. Our contribution is to display short programs,
prove their correctness, and quantify both their operation counts and
their intermediate sizes.

Arithmetic zero counting encodes values of a polynomial as consecutive
groups of binary digits and reads the number of its zeros from the
Hamming weight, the number of ones in a binary expansion. This method is
developed for algebraic sets in \cite{Pru26} and for elliptic curves in
\cite{DP26}. To obtain coordinate sums instead of counts, we repeat the
digit belonging to a cell $(x,y)$ according to its weight. A Pell solution
then contributes $x$, $y$, or $x+2Ky$, according to the sum we want to
recover. The signed residual $F(x,y)=x^2-1-dy^2$ needs moments of degrees
zero and two, namely $\sum_{j<K}Q^j$ and $\sum_{j<K}j^2Q^j$ at suitable
integer bases $Q$. A negative residual contributes an additional binary
weight $\nu_2(-F)$, where $\nu_2$ is the exponent of two dividing a
positive integer. Section~\ref{sec:counting} bounds the sum of these
contributions by counting square roots modulo powers of two. Sufficiently
wide groups of digits allow integer division to remove the error.
Squaring the residual removes all negative values and hence this error.
It requires fourth moments but permits narrower groups of digits.

The five programs in Section~\ref{sec:programs} implement these choices.
They either encode the two sums together, encode $A$ and recover $B$, or
encode the two sums separately. Their arithmetic subroutines are given
in Section~\ref{sec:arithmetic}, including a term for Hamming weight.
Their packed integers differ in size, but the arithmetic expansion of
Hamming weight dominates the full size bounds. With elementary parameters,
the shortest program has $94$ operations and a triply exponential bound
on intermediate bit lengths. Choosing a smaller parameter inside the
Hamming-weight subroutine costs four operations and makes the bound doubly
exponential. Hua's bound \cite{Hua42,Len02} gives the corresponding
$103$- and $107$-operation programs with $\sqrt d\log d$ in place of $d$
in the innermost exponent. Section~\ref{sec:parameters} makes these
comparisons explicit. Exact checks in Section~\ref{sec:verification}
support the formulas and counts, and Section~\ref{sec:questions} discusses
the remaining question of reducing intermediate sizes.

\section{Reconstruction from coordinate sums}\label{sec:reconstruction}

Throughout the paper, $K$ is an integer with $K>X_1$. Since $Y_n<X_n$,
the solutions in $\mathcal B_K=\{0,\ldots,K-1\}^2$ form the initial
segment indexed by $0\le n\le N$, with $N\ge1$.
The sums $A,B$ therefore combine an unknown number of powers of the same
unit. To recover that unit, we compare each geometric sum with its
conjugate, first including the constant term and then omitting it.
The two ratios differ by one power of $\eps$, which makes their quotient
independent of the endpoint $N$.

\begin{theorem}[Reconstruction from coordinate sums]\label{lem:identity}
Put $c=A^2-A-dB^2$. Then $c>B\sqrt d>0$,
\[
  \eps=\frac{c+B\sqrt d}{c-B\sqrt d},
\]
and
\begin{equation}\label{eq:fundamental}
  X_1=\frac{c^2+dB^2}{c^2-dB^2},\qquad
  Y_1=\frac{2Bc}{c^2-dB^2}.
\end{equation}
Both quotients are integers.
\end{theorem}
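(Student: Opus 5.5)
Write $S=A+B\sqrt d=\sum_{n=0}^N\eps^n$ and its conjugate $\bar S=A-B\sqrt d=\sum_{n=0}^N\eps^{-n}$, using $\bar\eps=\eps^{-1}$ from $X_1^2-dY_1^2=1$. The geometric sums give
\[
S=\frac{\eps^{N+1}-1}{\eps-1},\qquad \bar S=\frac{\eps^{-N-1}-1}{\eps^{-1}-1}=\eps^{-N}\,\frac{\eps^{N+1}-1}{\eps-1},
\]
so $S/\bar S=\eps^N$. Removing the $n=0$ term gives $S-1=\sum_{n=1}^N\eps^n$ and $\bar S-1=\sum_{n=1}^N\eps^{-n}$, whence $(S-1)/(\bar S-1)=\eps^{N+1}$. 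Dividing the two ratios gives
\[
\eps=\frac{(S-1)\bar S}{(\bar S-1)S}.
\]

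Next I expand this expression. The numerator is $(S-1)\bar S=S\bar S-\bar S=(A^2-dB^2)-A+B\sqrt d=c+B\sqrt d$. Similarly, $(\bar S-1)S=c-B\sqrt d$, which yields the stated formula for $\eps$. Before dividing I must check that nothing vanishes. Since $N\ge1$ and $\eps>1$, we have $\bar S>1$, so $\bar S-1>0$. Hence $c-B\sqrt d=(\bar S-1)S>0$, and $c+B\sqrt d=(S-1)\bar S>0$. Finally $B\ge Y_1>0$, which proves $c>B\sqrt d>0$.

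For \eqref{eq:fundamental}, I multiply the numerator and denominator by $c+B\sqrt d$:
\[
\eps=\frac{(c+B\sqrt d)^2}{c^2-dB^2}=\frac{c^2+dB^2}{c^2-dB^2}+\frac{2Bc}{c^2-dB^2}\sqrt d .
\]
The denominator is positive by the inequality already proved. Since $\sqrt d$ is irrational and $c,B$ are integers, comparing rational and irrational parts with $\eps=X_1+Y_1\sqrt d$ gives both formulas. The quotients are integers simply because they equal $X_1$ and $Y_1$.

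The only delicate step is the nonvanishing and positivity check, specifically $\bar S\ne1$. That is exactly where $N\ge1$, which follows from $K>X_1$, is used; everything else is routine algebra.
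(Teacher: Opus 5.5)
Your proposal is correct and follows the paper's proof essentially step for step. You use the two geometric-sum ratios $S/\overline S=\eps^N$ and $(S-1)/(\overline S-1)=\eps^{N+1}$, take their quotient, get positivity from $\overline S>1$ (using $N\ge1$) and $B\ge Y_1\ge1$, and then rationalize and compare the coefficients of $1$ and $\sqrt d$. Beyond that, you only write out routine verifications that the paper leaves implicit, such as the conjugate geometric sum and the expansion of $(S-1)\overline S$.
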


\begin{proof}
Let $S=A+B\sqrt d=\sum_{n=0}^N\eps^n$, and let $\overline S$ be its
conjugate. The finite geometric sums give
\[
  \frac{S-1}{\overline S-1}=\eps^{N+1},\qquad
  \frac S{\overline S}=\eps^N.
\]
Their quotient is
\[
  \eps=\frac{(S-1)\overline S}{(\overline S-1)S}
       =\frac{c+B\sqrt d}{c-B\sqrt d}.
\]
Because $S>1$ and $\overline S>1$, the numerator and denominator are
positive. Also $B\ge Y_1\ge1$, so $c>B\sqrt d>0$.
Rationalizing the denominator and comparing the coefficients of
$1,\sqrt d$ proves \eqref{eq:fundamental}.
\end{proof}

Here is the subroutine $\Recover(d,A,B)$ used at the end of each
fundamental-solution program:
\begin{align*}
  V_B&=d(BB),& c&=(AA\tsub A)\tsub V_B,\\
  V_c&=cc,& D_c&=V_c\tsub V_B,\\
  X_1&=\fl{(V_c+V_B)/D_c},&
  Y_1&=\fl{(2B)c/D_c}.
\end{align*}
\emph{Operation count:} $2+3+1+1+2+3=12$.
All subtractions are exact, and $D_c>0$ by Theorem~\ref{lem:identity}.

\begin{lemma}\label{lem:sum-bound}
For $n\ge1$, one has $X_{n+1}\ge3X_n$. Consequently,
\[
  B<A<\frac32K<2K.
\]
\end{lemma}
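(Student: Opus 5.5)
The plan is to use the recurrence $X_{n+1}=X_1X_n+dY_1Y_n$, which comes from $\eps^{n+1}=\eps\cdot\eps^n$. Since $d\ge2$ is nonsquare, the fundamental solution satisfies $X_1\ge2$ and $Y_1\ge1$.

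\textbf{Growth step.} For $n\ge1$ we have $dY_n^2=X_n^2-1$. This gives
\[
dY_1Y_n=\sqrt{dY_1^2}\,\sqrt{dY_n^2}=\sqrt{(X_1^2-1)(X_n^2-1)}.
\]
With $X_1\ge2$ this yields $dY_1Y_n\ge\sqrt3\,\sqrt{X_n^2-1}$. Hence
\[
X_{n+1}\ge 2X_n+\sqrt3\sqrt{X_n^2-1}.
\]
It remains to show $\sqrt3\sqrt{X_n^2-1}\ge X_n$, i.e.\ $3X_n^2-3\ge X_n^2$, i.e.\ $2X_n^2\ge3$. This holds because $X_n\ge X_1\ge2$. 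Therefore $X_{n+1}\ge3X_n$. An alternative route is $X_{n+1}=2X_1X_n-X_{n-1}\ge4X_n-X_n$, using $X_{n-1}\le X_n$. That route also works for $n\ge1$ with $X_0=1$, and it is the cleaner one to write down.

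\textbf{Sum bound.} We have $X_N<K$, and for $1\le n\le N$ iteration gives $X_n\le X_N3^{-(N-n)}$. Therefore
\[
\sum_{n=1}^N X_n\le X_N\sum_{j\ge0}3^{-j}<\tfrac32X_N\le\tfrac32(K-1),
\]
so $A=1+\sum_{n=1}^N X_n<1+\tfrac32(K-1)\le\tfrac32K$.

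\textbf{The inequality $B<A$.} This is termwise: $Y_0=0<1=X_0$, and $Y_n<X_n$ for all $n\ge1$ because $X_n^2=1+dY_n^2>Y_n^2$.

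\textbf{Main obstacle.} The only delicate point is the constant term $X_0=1$. It must be absorbed by the integrality slack $X_N\le K-1$, which is exactly what the estimate above does. I would write out that step carefully.
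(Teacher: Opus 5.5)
Your proposal is correct. The sum bound and the termwise comparison $B<A$ are the same as in the paper. Your alternative route for the growth step is also essentially the paper's. The paper derives $X_{n+1}=2X_1X_n-X_{n-1}$ from $\eps+\eps^{-1}=2X_1$, checks $X_2=2X_1^2-1\ge3X_1$ separately, and then inducts using $X_{n-1}\le X_n/3$. You use only monotonicity $X_{n-1}\le X_n$, which follows from the same induction starting at $X_0=1<X_1$. This handles $n=1$ uniformly and is enough, since strictness is not needed.

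Your primary route is genuinely different. It uses the addition formula $X_{n+1}=X_1X_n+dY_1Y_n$ together with $dY_1Y_n=\sqrt{(X_1^2-1)(X_n^2-1)}$. This bounds each ratio $X_{n+1}/X_n$ directly, with no second-order recurrence and no induction. You do not even need monotonicity for it: $Y_n\ge1$ gives $X_n^2\ge1+d\ge3$. The paper's route avoids square roots and stays within integer arithmetic on the recurrence.

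One small slip: $\sum_{j\ge0}3^{-j}$ equals $\frac32$, so your strict inequality after it is not correct as written. The strictness in $\sum_{n=1}^NX_n<\frac32X_N$ should come from the finite sum $\sum_{j=0}^{N-1}3^{-j}<\frac32$. In any case, $1+\frac32(K-1)=\frac32K-\frac12$ already gives the needed slack for $A<\frac32K$.
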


\begin{proof}
The recurrence $X_{n+1}=2X_1X_n-X_{n-1}$ follows from
$\eps+\eps^{-1}=2X_1$.
Since $X_1\ge2$, we have $X_2=2X_1^2-1\ge3X_1$.
If $X_n\ge3X_{n-1}$, then
$X_{n+1}\ge4X_n-X_n/3>3X_n$.
Summing the resulting geometric bound backwards from $X_N$ gives
\[
  A=1+\sum_{n=1}^N X_n
   <1+\frac32X_N
   \le1+\frac32(K-1)<\frac32K.
\]
Since $Y_n<X_n$ for all $n\ge0$, summing gives $B<A$.
\end{proof}

The bound $A<2K$ lets us store both sums in $A+2KB$ and recover them by
division and remainder. We can also avoid storing a second coordinate.
Conjugation gives $A-\sqrt d B$ as a short decreasing geometric sum,
so $A/\sqrt d$ lies just above the integer $B$. An approximation to
$\sqrt d$ with a sufficiently small error preserves the integer part.
The following theorem supplies such a rational approximation and shows,
in both directions, that either coordinate sum determines the other.

\begin{theorem}\label{thm:coordinate}
For $j\ge1$, define
\[
  R_j=\frac{4^j}{\binom{2j}{j}},\qquad \omega=\frac{R_{dK}}{R_K}.
\]
Then
\begin{equation}\label{eq:coordinate}
  A=\fl{\omega B}+2,\qquad B=\fl{A/\omega}.
\end{equation}
\end{theorem}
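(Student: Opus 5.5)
The plan is to compare both identities in \eqref{eq:coordinate} with the exact relation coming from conjugation. Let $\overline S=A-B\sqrt d=\sum_{n=0}^N\eps^{-n}$, as in the proof of Theorem~\ref{lem:identity}. Since $N\ge1$ and $\eps\ge 3+2\sqrt2$ (because $X_1\ge2$, $Y_1\ge1$), we get
\[
  A=B\sqrt d+t,\qquad 1<1+\eps^{-1}\le t<\frac{\eps}{\eps-1}\le 1.21 .
\]
So $A/\sqrt d$ exceeds $B$ by less than $1$, and $B\sqrt d$ lies strictly between $A-2$ and $A-1$ with room to spare. It then suffices to show that $\omega$ approximates $\sqrt d$ closely enough relative to the size of $A,B$, which Lemma~\ref{lem:sum-bound} bounds by $\frac32K$.

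\emph{Step 1.} For the approximation I would use the classical two-sided bounds on the central binomial coefficient,
\[
  \frac{4^j}{\sqrt{\pi(j+\frac13)}}\le\binom{2j}{j}\le\frac{4^j}{\sqrt{\pi(j+\frac14)}},
\]
which give $\sqrt{\pi(j+\frac14)}\le R_j\le\sqrt{\pi(j+\frac13)}$. Hence
\[
  \frac{dK+\frac14}{K+\frac13}\le\omega^2\le\frac{dK+\frac13}{K+\frac14}.
\]
The lower bound equals $d\bigl(1-\frac{1/3-1/(4d)}{K+1/3}\bigr)$, and the upper bound is at most $d$ once $d\ge\frac43$, which holds. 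Thus
\[
  \sqrt d\,(1-\delta)\le\omega\le\sqrt d,\qquad \delta\le\frac{1/3-1/(4d)}{2K}<\frac1{6K}.
\]
If a cleaner source is preferred, the same bounds can be proved directly. One uses that $R_j^2/(j+\frac14)$ increases and $R_j^2/(j+\frac13)$ decreases, via the ratio $R_{j+1}/R_j=\frac{2j+2}{2j+1}$, together with Wallis' limit $R_j^2/j\to\pi$.

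\emph{Step 2 (first identity).} From Step 1, $B\sqrt d(1-\delta)\le\omega B\le B\sqrt d=A-t$. Also $B\sqrt d\,\delta<\frac32K\cdot\frac1{6K}=\frac14$. So $\omega B\in(A-t-\frac14,\,A-t]\subset(A-2,\,A-1)$, using $1<t<1.21$. This gives $\fl{\omega B}=A-2$.

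\emph{Step 3 (second identity).} We have $A/\omega\ge A/\sqrt d=B+t/\sqrt d>B$. For the upper bound,
\[
  \frac A\omega\le\frac{B\sqrt d+t}{\sqrt d(1-\delta)}=\frac{B+t/\sqrt d}{1-\delta},
\]
and we need this to be $<B+1$. This is equivalent to $t/\sqrt d+\delta(B+1)<1$. With $B\sqrt d<\frac32K$ and $\delta<\frac1{6K}$, the term $\delta(B+1)$ is at most about $\frac{1}{4\sqrt d}+\frac1{6K}$. So it suffices that $t+\frac14+\frac{\sqrt d}{6K}<\sqrt d$.

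\emph{Main obstacle.} The only delicate point is this last inequality in the worst case $d=2$, where the margin is small. Here $t<1.21$, $\sqrt2\approx1.414$, and $K\ge X_1+1\ge4$. For $d=2$ I would also use the sharper $\delta\le\frac{1/3-1/8}{2K}$, which gives roughly $1.21+0.15+0.03<1.414$ with little room. So I expect to spend care on the constants: use the exact bound $t<\eps/(\eps-1)$ with $\eps\ge3+2\sqrt2$ for $d=2$. For larger $d$ the margin only grows, since $\sqrt d$ increases while $t\le\eps/(\eps-1)$ stays below $1.21$ and the $\delta$-term stays below about $0.25$. If the constant chase fails, I would fall back to a tighter estimate of $\delta$, for instance with $j+\frac14$ replaced by the more precise $j+\frac14+\frac1{32j}$, for small $K$.
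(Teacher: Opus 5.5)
Your argument is in substance the paper's proof. The conjugate geometric sum gives $A-B\sqrt d=t$ with $t$ slightly above $1$, and $A<\frac32K$ controls $B$. Then $\omega$ sits just below $\sqrt d$ with relative error $O(1/K)$, and a constant check with $d=2$ as the tight case finishes both identities. Your Step~3 inequality $A/\omega<B+1$ is equivalent to the paper's $A-\omega B<\omega$.

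The genuine difference is how you estimate $\omega$. You use absolute Wallis-type bounds $\pi(j+\frac14)\le R_j^2\le\pi(j+\frac13)$ and divide, which needs either the literature or the limit $R_j^2/j\to\pi$. The paper never uses $\pi$. It compares the decreasing sequence $R_j^2/j$ and the nondecreasing sequence $R_j^2/(j+\frac27)$ directly at $j=K$ and $j=dK$. This yields $0<d-\omega^2<\frac{2(d-1)}{7K+2}$, slightly sharper than your $\frac{d/3-1/4}{K+1/3}$, so its $d=2$ check has more room than yours. Your $d=2$ numbers do work, giving about $1.400<\sqrt2$.

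Two things must be repaired.

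\textbf{The bound on $\eps$.} The claim $\eps\ge3+2\sqrt2$ does not follow from $X_1\ge2$, $Y_1\ge1$, and it is false for $d=3$, where $\eps=2+\sqrt3$. There $t\ge1+\eps^{-1}=3-\sqrt3\approx1.27$ for every $K$, so $t<1.21$ fails. The correct uniform bound is $t<\frac{1+\sqrt3}{2}\approx1.366$, which is the paper's $\tau<(\sqrt3-1)/2$. Your justification that the margin only grows for larger $d$ is therefore wrong as written, but the argument survives:
\begin{itemize}
\item Step~2 only needs $1<t<\frac74$.
\item In Step~3 with $d=3$ and $K\ge3$, your sufficient condition becomes $\frac{1+\sqrt3}{2}+\frac14+\frac{\sqrt3}{18}\approx1.712<\sqrt3\approx1.732$. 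With your sharper $\delta\le\frac1{8K}$ the left side is about $1.63$.
\item For nonsquare $d\ge5$ one has $X_1\ge3$, so $t<1.21$ is valid there. Also $\frac{\sqrt d}{6K}<\frac16$, since $K>X_1>\sqrt d$.
\end{itemize}

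\textbf{The monotonicity in your fallback.} The directions are reversed. One has $4(j+1)^2(j+c)-(2j+1)^2(j+1+c)=(4c-1)j+(3c-1)$. Hence $R_j^2/(j+\frac14)$ strictly decreases and $R_j^2/(j+\frac13)$ strictly increases. With the directions you stated, the Wallis limit would give the two bounds in contradictory form. With the correct directions you can drop the limit entirely: comparing at $K$ and $dK$ gives $\frac{dK+1/3}{K+1/3}<\omega^2<\frac{dK+1/4}{K+1/4}<d$. This is already better than what you used. The paper's choice $c=\frac27$ is the least $c$ for which the sequence is nondecreasing from $j=1$.
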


\begin{proof}
The conjugate geometric sum satisfies
\[
  A-\sqrt d B=1+\tau,\qquad
  0<\tau<\frac1{\eps-1}<\frac12,
\]
because $\eps=X_1+\sqrt{X_1^2-1}\ge2+\sqrt3$.
Lemma~\ref{lem:sum-bound} also gives $B<3K/(2\sqrt d)$.

The recurrence $R_{j+1}/R_j=2(j+1)/(2j+1)$ shows that
$R_j^2/j$ strictly decreases. The sequence $R_j^2/(j+2/7)$
is nondecreasing and strictly increases for $j>1$, because
\[
  4(j+1)^2(j+2/7)-(2j+1)^2(j+1+2/7)=(j-1)/7.
\]
As $K\ge3$, comparison at $K$ and $dK$ yields
\begin{equation}\label{eq:ratio-bound}
  \frac{7dK+2}{7K+2}<\omega^2<d,\qquad
  0<d-\omega^2<\frac{2(d-1)}{7K+2}.
\end{equation}
In particular, $\omega>\sqrt d/2$. With $\Delta=\sqrt d-\omega$, we obtain
\[
  0<\Delta B=\frac{(d-\omega^2)B}{\sqrt d+\omega}
  <\frac{2(d-1)}{7d}<\frac12.
\]
Thus $\omega B=A-1-(\tau+\Delta B)$, with $0<\tau+\Delta B<1$,
and the first identity follows.

For the second identity, we prove $0<A-\omega B<\omega$.
Since $K\ge3$, \eqref{eq:ratio-bound} gives $\omega^2>(21d+2)/23$.
If $d\ge3$, then $\tau<(\sqrt3-1)/2<3/8$, and hence
\[
  0<A-\omega B<1+\frac38+\frac27=\frac{93}{56}<\frac53<\omega,
\]
where $\omega^2>65/23>25/9$ proves the last inequality.
If $d=2$, then $\eps=3+2\sqrt2$ and
$\tau<(\sqrt2-1)/2<3/14$. In this case
\[
  0<A-\omega B<1+\frac3{14}+\frac17=\frac{19}{14}<\frac{11}{8}<\omega,
\]
using $\omega^2>44/23>121/64$.
Dividing by $\omega$ gives $B<A/\omega<B+1$.
\end{proof}

The programs below use $B=\fl{A/\omega}$. The reverse identity explains
why the choice of which coordinate to encode is not essential.
To implement the recovery, we extract the binomial coefficients from
integer powers. Suppose $p=2^{\rho K}$ and $v=p^K$ are supplied, with
$\rho\ge2d$. The subroutine $\Coord(d,K,A,p,v)$ is
\begin{align*}
  h&=2K,& z_c&=p+1,& c_K&=\fl{z_c^h/v}\md p,\\
  H_c&=dh,& c_{dK}&=\fl{z_c^{H_c}/v^d}\md p,&
  a_c&=2^{H_c\tsub h}c_K,\\
  B&=\fl{A c_{dK}/a_c}.&&&&
\end{align*}
\emph{Operation count:} $1+1+3+1+4+3+2=15$.

All coefficients of $(p+1)^{2dK}$ are less than $p$, since they are
less than $2^{2dK}\le p$. Thus
$c_K=\binom{2K}{K}$ and $c_{dK}=\binom{2dK}{dK}$.
It follows that $a_c/c_{dK}=\omega$, so Theorem~\ref{thm:coordinate}
proves the output formula. The subtraction $H_c\tsub h$ is exact.

\section{Weighted counting of Pell solutions}\label{sec:counting}

To compute $A$, a solution $(x,y)$ must contribute its coordinate $x$
to the count. We arrange this by assigning a digit to each cell of the
square and repeating it $x$ times. If the digit has $w$ binary ones at a
nonzero residual and $2w$ at a zero, these repetitions give a known
background plus $wA$. The weights $y$ and $x+2Ky$ give $B$ and $A+2KB$
in the same way. We call the resulting integer a \emph{packed integer},
because its digits occupy separate groups of binary positions.

The obstacle is that $F(x,y)=x^2-1-dy^2$ can be negative. Subtracting a
negative value changes the pattern of binary carries and adds a valuation
to the digit's Hamming weight. We first identify that error, then bound
its sum so that integer division removes it.

\subsection{Digits for signed values}

\begin{lemma}\label{lem:digits}
Let $w\ge1$, $P=2^w$, and
$\delta_w(z)=(P-1)(P+1-z)$.
For $0\le z\le P$, this is a base-$P^2$ digit and
\[
  \HW(\delta_w(z))=
  \begin{cases}2w,&z=0,\\w,&z>0.\end{cases}
\]
For $|z|<P$, it is a positive base-$P^3$ digit and
\begin{equation}\label{eq:signed-digit}
  \HW(\delta_w(z))=
  \begin{cases}
    2w,&z=0,\\
    w,&z>0,\\
    w+\nu_2(-z),&z<0.
  \end{cases}
\end{equation}
\end{lemma}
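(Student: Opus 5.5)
We compute $\delta_w(z)$ directly in binary, writing $m=P+1-z$ so that $\delta_w(z)=(P-1)m=mP-m$; everything then reduces to the binary expansion of $m$.

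\emph{Range.} For $0\le z\le P$ we have $1\le m\le P+1$, hence $0<\delta_w(z)\le(P-1)(P+1)=P^2-1<P^2$, a base-$P^2$ digit. For $|z|<P$ we have $2\le m\le 2P$, hence $0<\delta_w(z)<2P^2\le P^3$ because $P\ge2$, a positive base-$P^3$ digit.

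\emph{Hamming weights for $0\le z\le P$.}
\begin{itemize}
\item If $z=0$, then $\delta_w(0)=P^2-1$, which has $2w$ ones.
\item If $1\le z\le P$, put $t=P-z\in[0,P-1]$, so $m=t+1$ and
\[
\delta_w(z)=tP+(P-1-t).
\]
Since $0\le t<P$ and $0\le P-1-t<P$, the high block of $w$ bits is $t$ and the low block is its $w$-bit complement $P-1-t$.
\item Together the two blocks contribute exactly $w$ ones, since every bit position in which $t$ has a zero is a one in $P-1-t$.
\end{itemize}

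\emph{Case $z<0$.} Write $z=-u$ with $1\le u<P$, so $m=P+1+u$ and
\[
\delta_w(z)=P^2+uP-(u+1).
\]
Let $k=\nu_2(u)$, which satisfies $k<w$. Then $u$ has the form $u=u'\,2^{k+1}+2^k$, and subtracting one from $uP$ borrows through the low positions as follows.
\begin{itemize}
\item Rewrite the value as $P^2+(u-1)P+(P-1-u)$.
\item The low block $P-1-u$ lies in $[0,P-1]$ and is the $w$-bit complement of $u$, so it has $w-\HW(u)$ ones.
\item The middle block $u-1$ lies in $[0,P-2]$. It is obtained from $u$ by turning its lowest one (at position $k$) into a zero and the $k$ zeros below it into ones, so $\HW(u-1)=\HW(u)-1+k$.
\item The top term $P^2$ is a single extra one.
\end{itemize}
Since these three pieces occupy disjoint bit ranges, the total is
\[
\HW(\delta_w(z))=(w-\HW(u))+(\HW(u)-1+k)+1=w+k=w+\nu_2(-z).
\]

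\emph{Nonnegative $z$ under $|z|<P$.} Here $z\le P-1$, so the case $0\le z<P$ is already covered by the earlier computation, and the same Hamming weights apply.

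The only delicate point is the borrow bookkeeping in the negative case. Writing the value as three nonnegative blocks, each lying in its own range of bit positions, makes the count immediate.
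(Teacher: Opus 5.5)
Your proof is correct and follows the paper's argument essentially step for step: you use the same block decompositions $\delta_w(z)=(P-z)P+(z-1)$ and $\delta_w(-u)=P^2+(u-1)P+(P-1-u)$, the same complementary-bits count, and the same identity $\HW(u-1)=\HW(u)-1+\nu_2(u)$. Your range checks via $m=P+1-z$ are slightly more explicit than the paper's, but they give the same bound $0<\delta_w(z)<2P^2\le P^3$.
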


\begin{proof}
At $z=0$, the digit is $P^2-1$.
For $1\le z\le P$, write
\[
  \delta_w(z)=(P-z)P+(z-1).
\]
The two coefficients occupy separate groups of $w$ bits and add to
$P-1$. Their binary expansions are complementary, so their Hamming
weights add to $w$.
If $z=-a$ with $1\le a<P$, then
\[
  \delta_w(-a)=P^2+(a-1)P+(P-1-a).
\]
These three binary pieces are disjoint. Since $P-1-a$ is the $w$-bit
complement of $a$, and
$\HW(a-1)-\HW(a)=\nu_2(a)-1$, their total weight is
$w+\nu_2(a)$. The bound $0<\delta_w(z)<2P^2\le P^3$ for $|z|<P$
ensures that each value fits in one base-$P^3$ digit.
\end{proof}

For example, take $w=3$, so $P=8$.
Then $\delta_w(0)=63=(111111)_2$ has six ones, whereas
$\delta_w(2)=49=(110001)_2$ has three.
The negative value $-4$ gives
$\delta_w(-4)=91=(1011011)_2$, with $3+\nu_2(4)=5$ ones.
Using groups of $3w$ bits keeps all three kinds of digit disjoint.

\begin{lemma}[weighted signed counting]\label{lem:signed-count}
Let $z_j$ be finitely many integers with $|z_j|<2^w$, and let
$\lambda_j$ be nonnegative integer weights. Form $M$ by placing
$\lambda_j$ copies of $\delta_w(z_j)$ in distinct base-$2^{3w}$
positions. Put
\[
  W=\sum_j\lambda_j,\qquad Z=\sum_{z_j=0}\lambda_j,
  \qquad \eta=\sum_{z_j<0}\lambda_j\nu_2(-z_j).
\]
Then $\HW(M)=w(W+Z)+\eta$. If $\eta<w$, then
\begin{equation}\label{eq:signed-count}
  \fl{\HW(M)/w}\tsub W=Z.
\end{equation}
For nonnegative $z_j\le2^w$, base $2^{2w}$ suffices and $\eta=0$.
\end{lemma}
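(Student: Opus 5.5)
The plan is to use Lemma~\ref{lem:digits} digit by digit and then add up, relying on the fact that Hamming weight is additive over disjoint groups of bits.

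\emph{Step 1: disjointness.} Since $|z_j|<2^w$, Lemma~\ref{lem:digits} gives $0<\delta_w(z_j)<2^{3w}$. Each copy therefore sits inside its own base-$2^{3w}$ position. Because the positions are distinct, $M$ is the sum of the shifted copies with no carries between them. Hence $\HW(M)$ is the sum of the Hamming weights of all the copies, counted with multiplicity $\lambda_j$.

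\emph{Step 2: summing.} By \eqref{eq:signed-digit}, every copy contributes $w$, a zero contributes an extra $w$, and a negative $z_j$ contributes an extra $\nu_2(-z_j)$. Summing over all copies gives
\[
\HW(M)=wW+wZ+\eta .
\]

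\emph{Step 3: extracting $Z$.} If $0\le\eta<w$, then $\HW(M)/w=W+Z+\eta/w$ has integer part $W+Z$. Since $Z\ge0$, we get $\fl{\HW(M)/w}\tsub W=Z$, and the truncated subtraction is exact.

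\emph{Step 4: the nonnegative case.} For $0\le z_j\le 2^w$, the first part of Lemma~\ref{lem:digits} says each $\delta_w(z_j)$ is a base-$2^{2w}$ digit with weight $2w$ or $w$. The same additivity argument then applies with base $2^{2w}$. There are no negative terms, so $\eta=0$.

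I do not expect a real obstacle. The only point that needs care is the no-carry claim in Step 1, which rests on the strict bound $\delta_w(z_j)<2^{3w}$, respectively $\delta_w(z_j)<2^{2w}$, already established in Lemma~\ref{lem:digits}.
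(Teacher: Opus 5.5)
Your proof is correct and follows the same route as the paper. Distinct base-$2^{3w}$ (respectively $2^{2w}$) positions make the Hamming weight additive; summing the digit identities of Lemma~\ref{lem:digits} with multiplicities $\lambda_j$ gives $\HW(M)=w(W+Z)+\eta$, and integer division by $w$ discards $\eta<w$. You only spell out the no-carry bound $0<\delta_w(z_j)<2^{3w}$ and the exactness of the truncated subtraction more explicitly than the paper does.
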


\begin{proof}
Distinct positions in a power-of-two base have disjoint binary digits.
Sum the identities in Lemma~\ref{lem:digits}, with multiplicities
$\lambda_j$. If $0\le\eta<w$, integer division by $w$ removes $\eta$.
\end{proof}

\subsection{A bound for the contribution of negative Pell residuals}

The error from one negative residual can be as large as its binary
length, but large valuations occur only in cells satisfying a congruence
modulo a high power of two. Counting these cells at each power bounds
the total error and lets us choose the width before evaluating the packing.

\begin{lemma}\label{lem:valuation}
Suppose $K\ge64$ and $2\le d<K^2$. Then $|F(x,y)|<K^4$ on
$\mathcal B_K$, and
\begin{equation}\label{eq:valuation}
  \sum_{F(x,y)<0}\nu_2(-F(x,y))<7K^2.
\end{equation}
The condition $\eta<w$ of Lemma~\ref{lem:signed-count} therefore holds
with $w=(2K)^4$ for the weight $x+2Ky$, and with $w=(2K)^3$ for
either weight $x$ or $y$.
\end{lemma}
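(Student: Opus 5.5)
The plan is to write the valuation as a sum of divisibility indicators,
\[
  \nu_2(m)=\sum_{t\ge1}[\,2^t\mid m\,],
\]
and to exchange the order of summation. Then
\[
  \sum_{F<0}\nu_2(-F)=\sum_{t\ge1}N_t,\qquad
  N_t=\#\{(x,y)\in\mathcal B_K:\ F(x,y)<0,\ 2^t\mid F(x,y)\}.
\]
The first claim is routine. On $\mathcal B_K$ we have $-dy^2-1\le F\le x^2-1$. Together with $d<K^2$ this gives $|F|<K^4$. So only $t<4\log_2K$ contributes, since a negative $F$ divisible by $2^t$ has $|F|\ge2^t$.

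For each $t$ I will bound $N_t$ by fixing $y$ and counting $x\in\{0,\dots,K-1\}$ with $x^2\equiv c_y:=1+dy^2\pmod{2^t}$. The tool is the standard description of square roots modulo $2^t$. An odd residue has at most $4$ square roots, and they are of the form $\pm x_0$ modulo $2^{t-1}$. A residue $4^kc'$ with $c'$ odd and $2k<t$ has at most $4\cdot2^{k}$ roots, all divisible by $2^k$. A residue divisible by $2^t$ has $2^{\lceil t/2\rceil}$ roots. I then split into two ranges of $t$.

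\emph{Small $t$, $2^t\le K$.} Every residue class meets $[0,K)$ in at most $K/2^t+1$ points. I will bound the number of solutions of $x^2-dy^2\equiv1$ modulo $2^t$ by $C\cdot2^t$ with an absolute $C$. This needs two cases.
\begin{itemize}
\item If $d$ is even, then $c_y$ is odd and each $y$ gives at most $4$ roots $x$.
\item If $d$ is odd, then for each $x$ the class of $y$ is constrained similarly by symmetry. Alternatively, one uses that the solutions form a group of order $O(2^t)$.
\end{itemize}
This gives $N_t\le C(K/2^t+1)^2 2^t$. Summing over $2^t\le K$ yields $O(K^2)$, and the constant must be tracked to land below something like $6K^2$.

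\emph{Large $t$, $2^t>K$.} Here I use that two roots congruent modulo $2^{t-1}$ and both lying in $[0,K)$ must coincide. So for each $y$ only $O(1)$ values of $x$ survive when $c_y$ is odd. When $c_y$ is even, $x$ is forced to be odd or divisible by a power of two, and I handle this in the same way. Together with negativity, which forces $dy^2\ge2^t$ and hence restricts $y\ge 2^{t/2}/\sqrt d$, this should give $N_t=O(K)$ with a small constant, or better $O(K^2/2^{t/2})$. Summing over the $O(\log K)$ values of $t$ must then stay below the remaining $K^2$. This is exactly where the hypothesis $K\ge64$ enters.

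The final sentence of the statement follows directly. The weight $x+2Ky$ satisfies $x+2Ky<2K^2$, so the error sum satisfies $\eta<2K^2\cdot7K^2<(2K)^4$. A weight $x$ or $y$ satisfies $\eta<K\cdot7K^2<(2K)^3$. Also $|F|<K^4\le2^w$ in both cases, so Lemma~\ref{lem:signed-count} applies.

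I expect the main obstacle to be the large-$t$ range together with the even residues $c_y$ (the case $d$ odd, $y$ odd). There, crude root counts of size $2^{t/2}$ combined with a $\log K$ number of levels threaten to exceed $7K^2$. My first attempt will be to charge each cell to its actual valuation through the explicit structure of roots modulo $2^t$, rather than summing worst cases level by level. If that is too lossy, I will instead exploit $|F|\ge2^t$ to shrink the admissible $y$-range geometrically in $t$.
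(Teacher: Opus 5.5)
\paragraph{Verdict: genuine gap.} Your framework agrees with the paper's. You write the error as $\sum_t N_t$, cap $t<4\log_2K$ using $|F|<K^4$, bound $N_t$ by counting square roots modulo $2^t$ along lines of the square, and deduce $\eta<w$ for both weights at the end. All of that is fine.

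The gap is the case you flag yourself and leave open: cells where $c_y=1+dy^2$ is even. Fixing $y$ there gives an even residue for $x^2$, which can have on the order of $2^{t/2}$ root classes. So the per-$y$ count is not $O(1)$, and neither of your fallbacks is carried out. The second fallback would not work anyway. Negativity only gives $dy^2\ge2^t-1$, a lower bound on $y$ that is vacuous when $2^t\le d$, and $d$ may be as large as $K^2-1$. It never removes a geometrically growing fraction of the $y$-range. As written, \eqref{eq:valuation} is therefore not established.

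\paragraph{Missing idea: split by the parity of $x$,} not by $d$ or by the size of $2^t$.
\begin{itemize}
\item If $x$ is odd and $2^t\mid F(x,y)$, then $c_y\equiv x^2$ is odd. For each fixed $y$ the admissible $x$ lie in at most four classes modulo $2^t$ (for $t\ge3$).
\item If $x$ is even and $F(x,y)$ is even, then $dy^2\equiv x^2-1$ is odd, so $d$ and $y$ are odd. Fixing $x$, the admissible $y$ satisfy $y^2\equiv d^{-1}(x^2-1)\pmod{2^t}$. This is again an odd residue, with at most four root classes.
\end{itemize}
Your ``by symmetry'' remark for odd $d$ contains the germ of this. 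But fixing $x$ works only for even $x$: for odd $x$ one has $x^2-1\equiv0\pmod8$. The even-$x$ cells are exactly the complement of the cells where fixing $y$ works.

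Every class modulo $2^t$ meets $[0,K)$ in at most $K/2^t+1$ points. This gives $N_t\le8K^2/2^t+8K$ for all $t\ge4$ at once, with no small/large split and no even residues. Take $N_1,N_2,N_3\le K^2$, and let $L<4\log_2K$ be the largest valuation. Then
\[
\sum_tN_t\le4K^2+8KL<4K^2+32K\log_2K\le7K^2,
\]
and the last inequality is where $K\ge64$ enters.

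\paragraph{Minor slips.}
\begin{itemize}
\item The congruence $x^2\equiv0\pmod{2^t}$ has $2^{\lfloor t/2\rfloor}$ roots, not $2^{\lceil t/2\rceil}$.
\item When $c_y$ is even, $x$ is forced to be even, not odd.
\item For $K<2^t$, it is congruence modulo $2^t$, not modulo $2^{t-1}$, that forces two points of $[0,K)$ to coincide.
\end{itemize}
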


\begin{proof}
For a negative residual, write $m_{x,y}=1+dy^2-x^2$.
Since $0<m_{x,y}<K^4$, its valuation is less than $4\log_2K$.
Let $L$ be the greatest valuation attained, or zero if all valuations
vanish. If $N_j$ counts positive-$m_{x,y}$ cells divisible by $2^j$,
then the sum in \eqref{eq:valuation} is $\sum_{j=1}^L N_j$.

For $j\ge3$, a fixed odd residue has at most four square roots modulo
$2^j$. Indeed, for two odd roots $u,v$,
$2^j\mid(u-v)(u+v)$, and one of $u-v,u+v$ has valuation one.
Thus $u\equiv v$ or $-v\pmod{2^{j-1}}$, giving at most four classes
modulo $2^j$.

For $j\ge4$, first count cells with odd $x$. For each fixed $y$ there
are at most four root classes for $x$, each containing at most
$K/2^j+1$ representatives in the square. If $x$ is even and
$m_{x,y}$ is even, then $d,y$ are odd. For each fixed even $x$,
invert $d$ modulo $2^j$ and apply the same bound to
$y^2\equiv d^{-1}(x^2-1)$. These counts give
\[
  N_j\le\frac{8K^2}{2^j}+8K\qquad(j\ge4).
\]
The root count includes all cells with $2^j\mid m_{x,y}$, so it bounds
$N_j$, which counts only those with $m_{x,y}>0$.
For $j=1,2,3$, use $N_j\le K^2$.
Summing gives
\[
  \sum_{j=1}^L N_j\le4K^2+8KL
  <4K^2+32K\log_2K\le7K^2,
\]
because $\log_2K/K\le6/64$ for $K\ge64$.
Now $x+2Ky<2K^2$ on the square, so the combined weighted error is
less than $14K^4<(2K)^4$. Each single coordinate weight is less than
$K$, giving an error less than $7K^3<(2K)^3$.
Both widths satisfy $2^w>K^4$.
\end{proof}

For Pell solutions, $K>X_1$ implies $d\le X_1^2-1<K^2$, as required.
A second width is useful when $K<64$: $w=K^6$ works for the combined
weight whenever $K>X_1$. For $K\ge4$, a nonzero residual has
$\nu_2(|F|)<4\log_2K\le2K$. For $K=3$, the bound follows from $|F|<81$.
Writing
\begin{equation}\label{eq:sigma}
  \sigma=\sum_{x,y<K}x=\sum_{x,y<K}y=\frac{K^2(K-1)}2,
\end{equation}
we obtain
\[
  \eta\le2K(2K+1)\sigma
  =(2K+1)K^3(K-1)<2K^5<K^6.
\]
This width gives larger packed integers than the fourth-degree width.

\subsection{Forming repeated digits by division}

Repeating a digit $x+Cy$ times appears to require a variable-length sum.
Division by $q^{K^2}-1$ produces the repetitions at once, because
$(q^{rK^2}-1)/(q^{K^2}-1)$ is a geometric sum of $r$ powers.
We place the original digits so that the quotient has disjoint copies
and the remainder is too small to alter the integer part.

\begin{lemma}\label{lem:quotient}
Let $q\ge2$, $C\ge0$, and let $D_{x,y}$ be base-$q$ digits on
$\mathcal B_K$, not all equal to $q-1$. Define
\[
  \mathcal P(Q_x,Q_y)=\sum_{x,y<K}Q_x^xQ_y^yD_{x,y}.
\]
Then
\begin{equation}\label{eq:quotient}
  \fl{\frac{\mathcal P(q^{K+K^2},q^{1+CK^2})}{q^{K^2}-1}}
  =\sum_{x,y<K}\sum_{i<x+Cy}q^{xK+y+iK^2}D_{x,y}.
\end{equation}
All positions on the right are distinct and less than $(C+1)K^3$.
The quotient is less than $q^{(C+1)K^3}$.
\end{lemma}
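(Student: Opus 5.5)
Set $r_{x,y}=x+Cy$ and $T=q^{K^2}$. The exponent of the term for cell $(x,y)$ in $\mathcal P(q^{K+K^2},q^{1+CK^2})$ is
\[
(K+K^2)x+(1+CK^2)y=e_{x,y}+r_{x,y}K^2,\qquad e_{x,y}=xK+y .
\]
So that term equals $q^{e_{x,y}}T^{r_{x,y}}D_{x,y}$. Then write $T^{r}=(T-1)\sum_{i<r}T^i+1$ and split
\[
\mathcal P=(T-1)\,\Sigma+\rho,\qquad \Sigma=\sum_{x,y<K}\sum_{i<r_{x,y}}q^{e_{x,y}+iK^2}D_{x,y},\qquad \rho=\sum_{x,y<K}q^{e_{x,y}}D_{x,y}.
\]
Here $\Sigma$ is exactly the right-hand side of \eqref{eq:quotient}. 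The claimed floor identity then reduces to $0\le\rho<T-1$.

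For that inequality, the map $(x,y)\mapsto xK+y$ is a bijection from $\mathcal B_K$ onto $\{0,\dots,K^2-1\}$. Hence $\rho$ is an integer whose base-$q$ expansion has the digits $D_{x,y}$ in the $K^2$ positions below $K^2$. It follows that $\rho\le q^{K^2}-1$, with equality only if every digit equals $q-1$. The hypothesis excludes this case, so $\rho<T-1$ and $\fl{\mathcal P/(T-1)}=\Sigma$. This is the one place the "not all equal to $q-1$" assumption is needed, and I expect it to be the main subtle point. Everything else is bookkeeping.

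Distinctness comes from the unique decomposition of a position $p=e+iK^2$ with $0\le e<K^2$. Such a $p$ determines $i=\fl{p/K^2}$ and $e=p\bmod K^2$, and $e$ determines $(x,y)$. So no two pairs $(x,y,i)$ give the same position, and the copies of the digits sit in distinct base-$q$ places without overlapping or carrying.

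For the size bound, use $i\le r_{x,y}-1\le(K-1)(C+1)-1$, which gives
\[
e+iK^2\le K^2-1+\bigl((C+1)(K-1)-1\bigr)K^2<(C+1)K^3 .
\]
Here one checks $(C+1)(K-1)K^2\le(C+1)K^3-K^2$. Since $\Sigma$ has digits $D_{x,y}<q$ in distinct positions, all below $(C+1)K^3$, we get $\Sigma\le q^{(C+1)K^3}-1$. This is the stated bound on the quotient.
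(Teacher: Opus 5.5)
Your proof is correct and follows essentially the same route as the paper. Both proofs expand each term with $q^{rK^2}=1+(q^{K^2}-1)\sum_{i<r}q^{iK^2}$, bound the remainder $\sum q^{xK+y}D_{x,y}$ below $q^{K^2}-1$ using the bijection $(x,y)\mapsto xK+y$ and the digit hypothesis, and decode positions by division by $K^2$. You additionally spell out the exponent bookkeeping $(K+K^2)x+(1+CK^2)y=xK+y+(x+Cy)K^2$ and the final base-$q$ bound on $\Sigma$, which the paper leaves implicit.
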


\begin{proof}
Use
\[
  q^{rK^2}=1+(q^{K^2}-1)\sum_{i<r}q^{iK^2}
\]
with $r=x+Cy$ in each term of the numerator.
The remainder is $\sum_{x,y<K}q^{xK+y}D_{x,y}$.
Since $xK+y$ runs through $0,\ldots,K^2-1$, this remainder is
less than $q^{K^2}-1$ under the stated digit hypothesis.
For a position $p=xK+y+iK^2$, division by $K^2$ recovers $i$,
then the remainder recovers $(x,y)$. The position bound follows from
$p<(x+Cy)K^2\le(C+1)(K-1)K^2<(C+1)K^3$.
\end{proof}

Squaring $F$ preserves its zeros and makes every residual nonnegative,
so Lemma~\ref{lem:digits} gives an error-free count with two groups of
bits per digit. The cost is that the packing now involves fourth powers
of the coordinates. Keeping $F$ itself uses only second powers but
requires three groups of bits and a width large enough to absorb the
valuation error.

For signed digits, use $D_{x,y}=\delta_w(F(x,y))$, $q=2^{3w}$.
Each digit is less than $q-1$.
For squared residuals, use $D_{x,y}=\delta_w(F(x,y)^2)$,
$q=2^{2w}$, and $2^w\ge d^2K^4$.
Indeed $|F|\le dK^2$, and the cell $(0,0)$ has $F^2=1$, so its
digit is less than $q-1$.
Taking $C=2K$ in \eqref{eq:quotient} yields
\begin{equation}\label{eq:combined-count}
  \HW(M)=w\bigl((2K+1)\sigma+A+2KB\bigr)+\eta.
\end{equation}
Taking $C=0$ yields
\begin{equation}\label{eq:one-count}
  \HW(M_x)=w(\sigma+A)+\eta_x.
\end{equation}
Transposing the coordinate strides gives
$\HW(M_y)=w(\sigma+B)+\eta_y$.
All three errors vanish for squared residuals. For signed residuals,
Lemma~\ref{lem:valuation} bounds them by the respective widths.

\section{Arithmetic subroutines}\label{sec:arithmetic}

The sums in the counting identities specify integers mathematically.
To express them by a fixed number of arithmetic operations, we evaluate
polynomially weighted geometric progressions. We then give the arithmetic
expression for Hamming weight and the two packing formulas.
Each named subroutine has local variables.

\subsection{Generalized geometric progressions}

The cell residual separates into powers of $x$ and $y$. Consequently its
packing is a combination of products of one-variable sums. Closed formulas
for these sums make the number of operations independent of the number
of cells. For $i\in\{0,2,4\}$, put
\[
  G_i(Q,t)=\sum_{j=0}^t j^iQ^j,
\]
with $0^0=1$. All calls below have $t=K-1$ and $K\ge3$.

\begin{lemma}\label{lem:moments}
Let $u=Q-1$, $Z_Q=Q^K$, $v=ut$, and $Q_+=Q+1$.
For $Q\ge4$, put $z=v-2$ and $g_2=vz+Q_+$. Then
\begin{equation}\label{eq:moments-pair}
  G_0(Q,t)=\fl{Z_Q/u},\qquad
  G_2(Q,t)=\fl{Z_Qg_2/u^3}.
\end{equation}
If $Q\ge7$, also put
\[
  h=Q_+(Q+9),\qquad
  g_4=g_2^2+Q\bigl(4v(z-Q_+)+h\bigr).
\]
Then
\begin{equation}\label{eq:moment-four}
  G_4(Q,t)=\fl{Z_Qg_4/u^5}.
\end{equation}
\end{lemma}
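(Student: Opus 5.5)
The plan is to derive exact closed forms for $G_i(Q,t)$ with $t=K-1$, so that $Q^{t+1}=Z_Q$. Each closed form will have the shape $u^{-(i+1)}\bigl(Z_Q\,g_i-r_i\bigr)$, where $r_i$ is a fixed polynomial in $Q$ with no dependence on $t$. The floors in \eqref{eq:moments-pair} and \eqref{eq:moment-four} then follow from checking $0<r_i<u^{i+1}$.

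\textbf{Step 1: closed forms.} The standard device is the operator $\theta=Q\,\partial_Q$, since $G_i=\theta^i G_0$ and
\[
G_0=\frac{Q^{t+1}-1}{Q-1}.
\]
I would not differentiate explicitly. Instead I will use the telescoping identity $(Q-1)G_i=\sum_j j^iQ^{j+1}-\sum_j j^iQ^j$. Reindexing gives
\[
(Q-1)G_i=t^iQ^{t+1}-\sum_{j=1}^{t}\bigl(j^i-(j-1)^i\bigr)Q^{j}.
\]
The bracketed factor $j^i-(j-1)^i$ is a polynomial in $j$ of lower degree, so it is a combination of the $G_k$ with $k<i$. For $i=2$ this yields a recursion in $G_1$ and $G_0$, and for $i=4$ one in $G_3,G_2,G_1,G_0$. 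Since only even moments are named, the cleaner route is to write the closed form in terms of $t+1=K$ and the Eulerian numerators, and then verify it. Concretely, it suffices to check the identities
\[
u^3G_2=Z_Q\,g_2-Q(Q+1),\qquad u^5G_4=Z_Q\,g_4-Q(Q^3+11Q^2+11Q+1),
\]
with $v=ut$. The constant terms are the Eulerian polynomials evaluated at $t=-1$ (equivalently, $\sum_{j\ge0} j^iQ^j$ as a rational function). I would establish these identities by induction on $t$. The induction requires only that the difference of the right-hand sides at $t$ and $t-1$ equals $u^{i+1}t^iQ^t$, which is a polynomial identity in $Q$ and $t$. I expect this bookkeeping, in particular matching the stated forms of $g_2$ and $g_4$ (with $z=v-2$ and $h=Q_+(Q+9)$), to be the main obstacle. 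I would first expand $g_2=u^2t^2-2ut+Q+1$ and compare it with the known formula for $\sum j^2Q^j$ before attacking $g_4$.

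\textbf{Step 2: floor bounds.} The case $G_0$ is immediate, because $Z_Q=uG_0+1$ and $1<u$. For $G_2$ the remainder is $Q(Q+1)$, and we need $Q(Q+1)<(Q-1)^3$. This holds for $Q\ge4$, since $20<27$, and fails at $Q=3$, which explains the hypothesis. For $G_4$ the remainder is $Q^4+11Q^3+11Q^2+Q$, and we need it below $(Q-1)^5$. At $Q=7$ the comparison is $2401+3773+539+7=6720<7776$, while at $Q=6$ it is $1296+2376+396+6=4074>3125$. For larger $Q$ the ratio $(Q-1)^5/\text{remainder}$ increases, which gives the threshold $Q\ge7$. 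Positivity of the remainders gives $G_i=\fl{Z_Qg_i/u^{i+1}}$. I would also confirm that $g_2,g_4>0$ for $t\ge2$, so that the expressions are natural numbers under the program's operations; this is immediate from the identities, since $G_i\ge0$.
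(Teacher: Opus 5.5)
Your proposal is correct and follows essentially the paper's proof: the same closed forms $u^{i+1}G_i=Z_Qg_i-r_i$, with $r_2=Q(Q+1)$ and $r_4=Q(Q^3+11Q^2+11Q+1)=Q(Q+1)(Q^2+10Q+1)$, proved by induction on $t$ via the increments, and followed by the check $0<r_i<u^{i+1}$. The one difference is that the paper settles this last check by explicit positivity, namely $u^3-Q(Q+1)=Q^2(Q-4)+2Q-1$ and an expansion of $u^5-r_4$ in powers of $Q-7$ with positive coefficients, whereas you appeal to monotonicity of the ratio $u^{i+1}/r_i$, which is true but left unproved.
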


\begin{proof}
Multiplying the defining sums by the appropriate powers of $Q-1$ gives
\begin{align*}
  uG_0&=Z_Q-1,\\
  u^3G_2&=Z_Q(v^2-2v+Q+1)-Q(Q+1),\\
  u^5G_4&=Z_Q\bigl(v^4-4v^3+6(Q+1)v^2
            -4(Q^2+4Q+1)v+a_4(Q)\bigr)-Qa_4(Q),
\end{align*}
where $a_4(Q)=(Q+1)(Q^2+10Q+1)$.
These identities also follow by induction on $t$: they hold at $t=0$,
and their increments are $u^{i+1}(t+1)^iQ^{t+1}$.
Expansion identifies the two numerator polynomials with $g_2$ and $g_4$.
The omitted tails are positive and smaller than their denominators.
For the second moment,
\[
  u^3-Q(Q+1)=Q^2(Q-4)+2Q-1>0.
\]
For the fourth, putting $v_0=Q-7\ge0$ gives
\[
  u^5-Qa_4(Q)=v_0^5+29v_0^4+321v_0^3+1624v_0^2+3336v_0+1056>0.
\]
Also $1<u$. Since each $G_i$ is an integer, dropping these tails
does not change the floors.
\end{proof}

The subroutine $\Gpair(Q,K,t)$ computes the pair $G_0,G_2$ as follows:
\begin{align*}
  u&=Q\tsub1,& Z_Q&=Q^K,& v&=ut,& Q_+&=Q+1,\\
  u_3&=u^3,& G_0&=\fl{Z_Q/u},& z&=v\tsub2,\\
  g_2&=vz+Q_+,& G_2&=\fl{Z_Qg_2/u_3}.&&
\end{align*}
\emph{Operation count:} $11$, or $10$ when $Z_Q=Q^K$ is supplied.

The subroutine $\Gtriple(Q,K,t)$ first performs the preceding
assignments and then computes
\begin{align*}
  u_5&=u^5,& h&=Q_+(Q+9),\\
  g_4&=g_2g_2+Q\bigl((4v)(z\tsub Q_+)+h\bigr),&
  G_4&=\fl{Z_Qg_4/u_5}.
\end{align*}
\emph{Operation count:} $11+1+2+7+2=23$, or $22$ when $Z_Q$ is
supplied. The subtraction in the pair is exact for $Q\ge4$.
For the triple, $z-Q_+=v-Q-3\ge Q-5>0$.
The programs compute shared quantities once and use them to return
the moments together.

\subsection{An arithmetic expression for Hamming weight}\label{sec:hw}

The packing method needs a count of binary ones, which is not one of our
primitive operations. We obtain it as the exponent of two dividing a
central binomial coefficient. Integer powers produce that coefficient,
and a gcd with a power of two isolates its valuation. This construction
has a fixed operation count, although its intermediate integers will
dominate the size analysis in Section~\ref{sec:expanded-size}.

Let $m\ge1$. Counting the factors of two in factorials gives
\[
  \nu_2(k!)=\sum_{j\ge1}\fl{k/2^j}=k-\HW(k).
\]
The second equality follows by summing the contribution of each binary
digit of $k$. Since $\HW(2m)=\HW(m)$, it follows that
\begin{equation}\label{eq:kummer}
  \HW(m)=\nu_2\binom{2m}{m}.
\end{equation}
This is the central-binomial case of Kummer's theorem used in
\cite{PSh24}. A base-$2^{2m}$ digit extraction evaluates the binomial
coefficient: the coefficients of $(1+z)^{2m}$ are less than $2^{2m}$,
so
\begin{equation}\label{eq:binomial}
  \binom{2m}{m}
  =\fl{\frac{(2^{2m}+1)^{2m}}{(2^{2m})^m}}\md 2^{2m}.
\end{equation}

We will use a positive integer $e$ satisfying
\begin{equation}\label{eq:e-condition}
  \HW(m)<e\le2m.
\end{equation}
For $\Pi=2^e$ and
$\alpha=\binom{2m}{m}\md\Pi$, the valuation condition implies
$\alpha>0$ and
\[
  \gcd(\alpha,\Pi)=2^{\HW(m)}.
\]
The following identity supplies the gcd as an arithmetic term.

\begin{lemma}\label{lem:gcd}
For positive integers $\alpha,\beta$ with $\alpha\beta\ge2$, put
$\gamma=\alpha\beta$. Then
\begin{equation}\label{eq:gcd}
  \gcd(\alpha,\beta)=
  \left(\fl{\frac{2^{\gamma(\gamma+\alpha+\beta)}}
  {(2^{\gamma\alpha}-1)(2^{\gamma\beta}-1)}}
  \md 2^\gamma\right)-1.
\end{equation}
\end{lemma}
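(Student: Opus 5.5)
The plan is to expand the quotient as a double geometric series in negative powers of two, reduce to a count of solutions of a linear congruence, and show that the block read off by $\md 2^\gamma$ is exactly $\gcd(\alpha,\beta)+1$.

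\emph{Step 1: expansion.} Put $r=2^{\gamma}$, $a=\gamma\alpha$, $b=\gamma\beta$. Then
\[
  \frac{1}{(2^{a}-1)(2^{b}-1)}=\sum_{i,j\ge1}2^{-ia-jb},
\]
so
\[
  \frac{2^{\gamma(\gamma+\alpha+\beta)}}{(2^{a}-1)(2^{b}-1)}
  =\sum_{i,j\ge0}2^{\gamma(\gamma-i\alpha-j\beta)}
  =\sum_{k}N_k\,2^{\gamma(\gamma-k)},
\]
where $N_k=\#\{(i,j)\in\N^2: i\alpha+j\beta=k\}$. The terms with $k\le\gamma$ are integers. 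The fractional part collects the terms with $k>\gamma$.

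\emph{Step 2: the digit at $k=\gamma$.} Reduction modulo $r=2^\gamma$ keeps only the terms with $\gamma-k\le0$ that are integral, namely the term $k=\gamma$, which contributes $N_\gamma$. Every term with $k<\gamma$ is divisible by $2^\gamma$. This works provided $N_\gamma<2^\gamma$ and the fractional tail does not carry into the integer part.

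\emph{Step 3: count $N_\gamma$.} Let $g=\gcd(\alpha,\beta)$. The solutions of $i\alpha+j\beta=\alpha\beta$ are $i=t\beta/g$, $j=(g-t)\alpha/g$ for $t=0,\dots,g$. Hence $N_\gamma=g+1\le\gamma+1<2^\gamma$; this uses $\gamma\ge2$, and also works for $\gamma=1$ excluded by hypothesis.

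\emph{Step 4, the main obstacle: the tail is below $1$.} The tail equals
\[
  \sum_{k>\gamma}N_k2^{-\gamma(k-\gamma)}.
\]
Only $k\equiv0$ modulo $g$ occur, but crude bounds suffice. We have $N_k\le k/\max(\alpha,\beta)+1\le k+1$. Write $s=k-\gamma\ge1$. Then the tail is at most
\[
  \sum_{s\ge1}(\gamma+s+1)2^{-\gamma s}.
\]
For $\gamma\ge2$ the first term is at most $(\gamma+2)2^{-\gamma}\le1$, with equality at $\gamma=2$, so this needs care. To sharpen it, use $N_k\le k/\max(\alpha,\beta)+1$ together with $\max(\alpha,\beta)\ge\sqrt\gamma$; alternatively, for $\gamma=2$ (that is, $\{\alpha,\beta\}=\{1,2\}$) compute exactly. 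One checks that the tail is strictly less than $1$. The floor then equals the integer sum, its residue mod $2^\gamma$ is $g+1$, and subtracting $1$ gives \eqref{eq:gcd}.

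The only delicate point is this tail estimate at small $\gamma$. I would handle $\gamma\le3$ by direct computation and use the geometric bound for $\gamma\ge4$.
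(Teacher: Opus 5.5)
Your proposal is correct and is essentially the paper's proof. Both use the same double geometric expansion, the same count $N_\gamma=\gcd(\alpha,\beta)+1$ of solutions of $i\alpha+j\beta=\gamma$, and the same need to keep the tail below one.

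The paper avoids your case split by using $\max(\alpha,\beta)\ge2$, hence $N_k\le k/2+1$. This bounds the tail by $(\gamma/2+1)/(2^\gamma-1)+2^\gamma/\bigl(2(2^\gamma-1)^2\bigr)\le 8/9$ for every $\gamma\ge2$. Your $\sqrt\gamma$ variant, taken literally, gives about $1.12$ at $\gamma=2$, so integrality is what makes that case work; your fallback of checking $\gamma\le3$ directly is fine. Also, your aside that $N_\gamma<2^\gamma$ holds at $\gamma=1$ is false, since $g+1=2=2^1$, but that case is excluded anyway.
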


\begin{proof} This was originally proved by Prunescu and Shunia in \cite{PSg24}. Here we show a shorter proof. 
Set $z=2^\gamma$, and let $a_r$ count the nonnegative integer
solutions of $\alpha i+\beta j=r$. Geometric expansion gives
\[
  \frac{z^{\gamma+\alpha+\beta}}
       {(z^\alpha-1)(z^\beta-1)}
  =\sum_{i,j\ge0}z^{\gamma-\alpha i-\beta j}
  =\sum_{r=0}^{\gamma}a_rz^{\gamma-r}
   +\sum_{k\ge1}a_{\gamma+k}z^{-k}.
\]
Since $\max(\alpha,\beta)\ge2$, one has $a_r\le r/2+1$.
The expression $(\gamma/2+1)/(2^\gamma-1)
+2^\gamma/(2(2^\gamma-1)^2)$ equals $8/9$ at $\gamma=2$,
and both summands decrease for integer $\gamma\ge2$.
The final sum is therefore at most
\[
  \frac{\gamma/2+1}{z-1}
   +\frac{z}{2(z-1)^2}\le\frac89<1.
\]
Thus the floor discards the final sum. Reduction modulo $z$
leaves $a_\gamma=\gcd(\alpha,\beta)+1$, which is less than $z$.
Indeed, if $g=\gcd(\alpha,\beta)$, the solutions at $r=\alpha\beta$
are $i=(\beta/g)k$, $j=(\alpha/g)(g-k)$ for $0\le k\le g$.
\end{proof}

To read a valuation from $g=2^\nu$ with $\nu<e$, put $u=2^e-1$.
Then
\[
  g^e=(u+1)^\nu\equiv1+\nu u\pmod{u^2}.
\]
Here $e\ge2$, and $1+\nu u<u^2$. Dividing the remainder by
$u$ and taking the floor gives $\nu$.
This valuation extraction is also used in \cite{Mazzanti,Marchenkov}.

The resulting subroutine $\mathsf H(m,e)$ consists of three parts.

\textbf{H1. Binomial residue.} Compute
\[
  a=2m,\qquad L=2^{a},\qquad
  \alpha=\fl{(L+1)^{a}/L^m}\md\Pi.
\]
For the default choice, set $e=a$ and $\Pi=L$ by reuse.
For a supplied $e$, compute $\Pi=2^e$ before the last assignment.
Because $\Pi\mid L$, the output is the desired binomial residue.

\emph{Operation count:} $7$ for the default choice, or $8$ with a
supplied $e$.

\textbf{H2. Gcd.} Compute
\[
  \gamma=\alpha\Pi,\qquad
  g=\left(\fl{\frac{2^{\gamma(\gamma+\alpha+\Pi)}}
  {(2^{\gamma\alpha}\tsub1)(2^{\gamma\Pi}\tsub1)}}
  \md 2^\gamma\right)\tsub1.
\]
\emph{Operation count:} $16$.

\textbf{H3. Valuation.} Compute
\[
  u=\Pi\tsub1,\qquad
  \mathsf H(m,e)=\fl{(g^e\md u^2)/u}.
\]
\emph{Operation count:} $5$.

\emph{Total operation count:} $28$ with $e=2m$, or $29$ with a
supplied $e$ satisfying \eqref{eq:e-condition}.
The default is admissible because $\HW(m)\le m<2m$.
We write $\mathsf H(m)$ for this default $28$-operation program.

\subsection{Evaluating the cell packings}\label{sec:packing-formulas}

For a width $w$, let $P=2^w$ and $P_-=P\tsub1$.
At bases $Q_x,Q_y$, write
\[
  U_i=G_i(Q_x,K-1),\qquad V_i=G_i(Q_y,K-1).
\]
The signed packing is
\[
  \mathcal P_s(Q_x,Q_y)
  =\sum_{x,y<K}Q_x^xQ_y^y\delta_w(F(x,y)).
\]
Expanding $P+1-F=P+2-x^2+dy^2$ yields the subroutine
\begin{equation}\label{eq:signed-packing}
  \Signed(U,V)
  =P_-\bigl[V_0((P+2)U_0\tsub U_2)+U_0(dV_2)\bigr].
\end{equation}
\emph{Operation count:} $8$, with $d,P,P_-$ supplied.
The subtraction is exact when $P>(K-1)^2$.

The squared packing is
\[
  \mathcal P_q(Q_x,Q_y)
  =\sum_{x,y<K}Q_x^xQ_y^y\delta_w(F(x,y)^2).
\]
For a supplied $d_2=d^2$, its subroutine is
\begin{align}
  D_2&=dV_2,\notag\\
  \Squared(U,V)
    &=P_-\bigl[V_0(PU_0\tsub U_4)
       +2U_2(V_0+D_2)\tsub U_0(2D_2+d_2V_4)\bigr].
       \label{eq:squared-packing}
\end{align}
\emph{Operation count:} $14$, with $d,d_2,P,P_-$ supplied.
Computing $d_2$ costs one additional operation, shared if two packings
are formed.

For verification, expand
\[
  P+1-F^2=(P-x^4)+2x^2(1+dy^2)-(2dy^2+d^2y^4).
\]
Summing its three terms gives \eqref{eq:squared-packing}.
Under $P\ge d^2K^4$, the first subtraction is exact term by term.
The final subtraction is exact because the resulting polynomial is
$P+1-F^2\ge1$ throughout the square.
\section{Five programs for the fundamental solution}\label{sec:programs}

Each program in this section takes $d,K,w$ as inputs. The conditions on
$K,w$ are stated separately for each construction.
Calls to $\Gpair$, $\Gtriple$, $\Signed$, $\Squared$, $\Coord$, and
$\Recover$ mean substitution of the straight-line subroutines already
displayed. The Hamming-weight call is also expanded in all total counts.
The costs of constructing $K,w$ from $d$ are added in
Section~\ref{sec:parameters}. Programs SC and SO display the common
base and coordinate-extraction subroutines in full. The other programs
call these named subroutines and give their complete sequences separately.

\subsection{Signed residual with the combined weight}\label{sec:sc}

The weight $x+2Ky$ produces $A+2KB$. It requires one Hamming weight
and no binomial approximation for coordinate recovery.

\begin{theorem}\label{thm:sc}
Let $K>X_1$, $K\ge64$, and $w=(2K)^4$.
Program SC below returns $(X_1,Y_1)$.
Its packed integer has at most $3w(2K+1)K^3=O(K^8)$ binary digits,
and its intermediates outside the Hamming-weight subroutine have
$O(K^8)$ binary digits.
\end{theorem}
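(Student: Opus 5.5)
The plan is to reconstruct Program SC from the lemmas and check the correctness of each stage in turn. Set $P=2^w$, $q=2^{3w}$ and $C=2K$. The packing bases are $Q_x=q^{K+K^2}$ and $Q_y=q^{1+2K^2}$. Call $\Gpair$ twice to obtain $U_0,U_2$ at $Q_x$ and $V_0,V_2$ at $Q_y$; the condition $Q\ge4$ in Lemma~\ref{lem:moments} clearly holds. Then form $\mathcal P_s=\Signed(U,V)$ from \eqref{eq:signed-packing}, divide by $q^{K^2}\tsub1$ to get $M$, compute $h=\mathsf H(M)$, and set
\[
  S=\fl{h/w}\tsub\bigl((2K+1)\sigma\bigr),
\]
with $\sigma$ from \eqref{eq:sigma}. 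Recover $A=S\md2K$ and $B=\fl{S/(2K)}$, and finish with $\Recover(d,A,B)$.

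Correctness is a chain of the earlier results.
\begin{enumerate}
\item The subtraction in $\Signed$ is exact because $P=2^{(2K)^4}>(K-1)^2$. Lemma~\ref{lem:moments} then shows that the program value equals $\mathcal P_s(Q_x,Q_y)$.
\item Lemma~\ref{lem:valuation} requires $K\ge64$ and $d<K^2$; the latter follows from $K>X_1$. It gives $|F|<K^4<2^w$, so Lemma~\ref{lem:digits} makes each $D_{x,y}=\delta_w(F(x,y))$ a base-$2^{3w}$ digit less than $q-1$.
\item Lemma~\ref{lem:quotient} with $C=2K$ therefore gives $M$ as disjoint copies of these digits, with the weight $x+2Ky$.
\item Lemma~\ref{lem:signed-count} applies with $W=(2K+1)\sigma$, since summing $x+2Ky$ over the square gives $\sigma+2K\sigma$. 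It also gives $Z=A+2KB$, because the zeros of $F$ in $\mathcal B_K$ are exactly the Pell solutions $(X_n,Y_n)$ with $n\le N$.
\item The condition $\eta<w$ is exactly the conclusion of Lemma~\ref{lem:valuation} for the combined weight, so \eqref{eq:signed-count} yields $S=A+2KB$.
\item Lemma~\ref{lem:sum-bound} gives $A<2K$, so division and remainder by $2K$ separate $A$ and $B$.
\item Theorem~\ref{lem:identity} and the remark after $\Recover$ give $(X_1,Y_1)$.
\end{enumerate}

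For the size bounds, the packed integer is $M$. By Lemma~\ref{lem:quotient}, $M<q^{(C+1)K^3}=2^{3w(2K+1)K^3}$, which is the stated bound. Since $w=16K^4$, this is $O(K^8)$ binary digits. The other intermediates must be checked one at a time.
\begin{itemize}
\item $Q_x^K$ has about $3w(K^2+K^3)$ bits and $Q_y^K$ about $3w(K+2K^3)$ bits. Both are $O(wK^3)=O(K^7)$.
\item The $u^3$ denominators and the numerators $Z_Qg_2$ add only $O(\log Q)=O(wK^2)$ bits.
\item $\mathcal P_s$ itself is below roughly $Q_x^KQ_y^K\cdot q$, hence has $O(wK^3)$ bits.
\item $q^{K^2}$ has $O(wK^2)$ bits, and the final $\Recover$ values are tiny by comparison.
\end{itemize}
All of these fit inside $O(K^8)$.

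The main obstacle is bookkeeping rather than any single idea. One point is confirming that the expansion in \eqref{eq:signed-packing} exactly matches the digit $(P-1)(P+1-F)$, including the constant term $P+2$. Another is tracking the numerators inside $\Gpair$: the product $Z_Qg_2$ carries a factor of size $Q^2$, so it has $O(\log Q)$ more bits than $Z_Q$, and I must verify this stays $O(K^8)$. I would list the program's assignments explicitly and annotate each with its bit length to settle this.
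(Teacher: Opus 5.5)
Your correctness argument follows the paper's proof, written out in more detail. You apply Lemma~\ref{lem:quotient} with $C=2K$. Then \eqref{eq:combined-count} and Lemma~\ref{lem:valuation} give $\fl{H_M/w}\tsub(2K+1)\sigma=A+2KB$, Lemma~\ref{lem:sum-bound} justifies splitting by $2K$, and Theorem~\ref{lem:identity} finishes. Your packed-length bound, from positions below $(C+1)K^3$ with $3w$ bits each, is also the paper's.

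There is one concrete error to fix. The second base is $Q_y=q\,q_3^2=q^{1+CK^2}=q^{1+2K^3}$, not $q^{1+2K^2}$. Taken literally, your exponent is the case $C=2$ of Lemma~\ref{lem:quotient}. The weight would then be $x+2y$, the background would be $3\sigma$ rather than $(2K+1)\sigma$, and your decoding steps would fail. Your steps 3--5 implicitly use the correct $C=2K$, so the correctness chain survives.

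The size bookkeeping does not survive, because you computed it with the wrong exponent. In fact $Q_y^K=q^{K+2K^4}$ has about $6wK^4$ bits. Hence $V_0$, the numerator $Z_{Q_y}g_2$, the products inside $\Signed$, and $\mathcal P_s$ all have $\Theta(wK^4)$ bits, not $O(wK^3)$. Your estimate also contradicts your own bound on $M$. We have $\mathcal P_s\ge M$, and $M\ge q^{(2K+1)(K-1)K^2-1}$ because the last copy of cell $(K-1,K-1)$ is a positive digit. Likewise $\log Q_y$ is $O(wK^3)$, not $O(wK^2)$. The stated conclusion $O(K^8)$ still holds, but only because $wK^4=16K^8$. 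The paper makes exactly this point when it says the progression powers and their polynomial factors have $O(wK^4)=O(K^8)$ bits. As written, your accounting would imply that every intermediate other than $M$ has $O(K^7)$ bits, which is false.
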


\textbf{SC1. Combined-weight bases $\mathsf B_C^{(3)}(K,w)$.}
\begin{align*}
  t&=K\tsub1,& P&=2^w,& P_-&=P\tsub1,& q&=P^3,\\
  q_1&=q^K,& q_2&=q_1^K,& q_3&=q_2^K,\\
  Q_x&=q_1q_2,& Q_y&=q(q_3q_3).&&
\end{align*}
\emph{Operation count:} $10$.
The subroutine returns all nine displayed values. We also define
$\mathsf B_C^{(2)}(K,w)$ by using $q=PP$ in place of $q=P^3$,
with the same count.

\textbf{SC2. Moments.}
\[
  (U_0,U_2)=\Gpair(Q_x,K,t),\qquad
  (V_0,V_2)=\Gpair(Q_y,K,t).
\]
\emph{Operation count:} $11+11=22$.

\textbf{SC3. Cell packing.}
\[
  T_s=\Signed(U,V).
\]
\emph{Operation count:} $8$.

\textbf{SC4. Weighted packing.}
\[
  M=\fl{T_s/(q_2\tsub1)}.
\]
\emph{Operation count:} $2$.

\textbf{SC5. Hamming weight.}
\[
  H_M=\mathsf H(M).
\]
\emph{Operation count:} $28$.

\textbf{SC6. Coordinate sums $\mathsf E_C(K,w,t,H_M)$.}
\begin{align*}
  C&=2K,& \sigma&=\fl{(KK)t/2},\\
  C_+&=C+1,& W_C&=C_+\sigma,\\
  R_C&=\fl{H_M/w}\tsub W_C,\\
  A&=R_C\md C,& B&=\fl{R_C/C}.
\end{align*}
\emph{Operation count:} $10$.
This subroutine returns $(A,B)$.

\textbf{SC7. Fundamental solution.}
\[
  (X_1,Y_1)=\Recover(d,A,B).
\]
\emph{Operation count:} $12$.

\emph{Total operation count:}
\[
  10+22+8+2+28+10+12=92.
\]
There are $64$ operations outside one Hamming-weight call.

\begin{proof}
The bases have exponents $K+K^2$ and $1+2K^3$, so
Lemma~\ref{lem:quotient} applies with $C=2K$.
By \eqref{eq:combined-count} and Lemma~\ref{lem:valuation},
$R_C=A+CB$. Lemma~\ref{lem:sum-bound} gives $A<C$, which proves
the quotient and remainder assignments in SC6.
Theorem~\ref{lem:identity} proves SC7.
The positions in $M$ are less than $(C+1)K^3$, and each position has
$3w$ bits. The progression powers and their polynomial factors have
$O(wK^4)=O(K^8)$ bits as well.
\end{proof}

The same program is valid for any $K>X_1$ and width satisfying
$2^w>\max_{\mathcal B_K}|F|$ and
$\sum_{F<0}(x+2Ky)\nu_2(-F)<w$.
In particular, the width $K^6$ established after
Lemma~\ref{lem:valuation} applies for all $K>X_1$, with
$O(K^{10})$ packed bits.

\subsection{Signed residual with one coordinate sum}\label{sec:so}

Encoding $A$ alone uses the weight $x$. A call to $\Coord$ then
recovers $B$ from $A$, using powers already computed for the packing.

\begin{theorem}\label{thm:so}
Let $K>X_1$, $K\ge64$, and $w=(2K)^3$.
Program SO below returns $(X_1,Y_1)$.
Its packed integer has at most $3wK^3=O(K^6)$ binary digits.
The intermediates outside Hamming weight have
$O(K^6+dK^5)$ binary digits, and hence $O(K^6)$ when $K\ge d$.
\end{theorem}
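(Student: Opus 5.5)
The proof will follow the same pattern as for Theorem~\ref{thm:sc}, with the single-coordinate weight $x$ in place of $x+2Ky$. I take Program SO to be SC with three changes. First, $C=0$, so the weight is $x$. Second, SC6 is replaced by $A=\fl{H_M/w}\tsub\sigma$. Third, $B$ is obtained by a call to $\Coord(d,K,A,p,v)$ with $p=2^{\rho K}$ and $v=p^K$ supplied, with $\rho\ge 2d$, before $\Recover$ is called.

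\textbf{Correctness.} The steps run as follows.
\begin{itemize}
\item Choose the bases so that $Q_x$ has exponent $K+K^2$ and $Q_y$ has exponent $1$ in $q=2^{3w}$. This is the case $C=0$ of Lemma~\ref{lem:quotient}, with $q_2=q^{K^2}$.
\item The signed digits $\delta_w(F)$ are less than $q-1$, because $|F|<K^4<2^w$ by Lemma~\ref{lem:valuation}.
\item The exactness conditions of $\Signed$ and $\Gpair$ hold, since $P>(K-1)^2$ and $Q\ge4$.
\item Then \eqref{eq:one-count} gives $\HW(M_x)=w(\sigma+A)+\eta_x$.
\item Lemma~\ref{lem:valuation} with $w=(2K)^3$ gives $\eta_x<7K^3<w$. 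Lemma~\ref{lem:signed-count} then yields $\fl{H_M/w}\tsub W=A$ with $W=\sigma$, where $\sigma$ is given by \eqref{eq:sigma}.
\item The hypotheses of Theorem~\ref{thm:coordinate} hold because $K\ge64\ge3$. Hence $\Coord$ returns $B=\fl{A/\omega}$; its digit-extraction argument requires $2^{2dK}\le p$, which is guaranteed by $\rho\ge2d$.
\item Finally, Theorem~\ref{lem:identity} gives the output of $\Recover$.
\end{itemize}

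\textbf{Size bounds.} By Lemma~\ref{lem:quotient} with $C=0$, all positions in $M$ are less than $K^3$. Each position occupies $3w$ bits, so $M$ has at most $3wK^3=O(K^6)$ binary digits.

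The bases have $O(wK^2)$ bits. The moment computations raise them to the power $K$ and multiply by polynomial factors of degree at most $3$ in $Q$. This gives $O(wK^3)=O(K^6)$ bits, and $T_s$ is of the same order.

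In $\Coord$, the dominant quantity is $z_c^{H_c}=(p+1)^{2dK}$. It has about $2dK\cdot\rho K$ bits, and similarly $v^d=p^{dK}$. To reach $O(dK^5)$, I would take $p$ from powers already available. The natural choice is $p=P=2^w$, so $\rho K=w=8K^3$. Then $\rho\ge2d$ holds because $d<K^2$, and $(p+1)^{2dK}$ has $O(dKw)=O(dK^4)$ bits, well within the claim.

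The remaining steps are small. The product $A c_{dK}$ has $O(dK)$ bits. The quantity $a_c=2^{2dK-2K}c_K$ has $O(dK)$ bits. Everything in $\Recover$ has $O(\log K)$ bits, since $A<2K$ by Lemma~\ref{lem:sum-bound}. Summing gives $O(K^6+dK^5)$, and when $K\ge d$ the term $dK^5$ is at most $K^6$.

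\textbf{Main obstacle.} The step most likely to be delicate is matching the stated bound $O(dK^5)$ exactly. That bound depends on which supplied power serves as $p$ in the actual program. If the program uses $p=q=P^3$ rather than $P$, the exponent $\rho K$ becomes $3w=24K^3$. The binomial power then has $O(dK\cdot K^3)=O(dK^4)$ bits, still within the bound. If instead the program uses $p=q_1$ or a larger power, the count increases by a factor of $K$, which would match $dK^5$.

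I would therefore read off the actual choice of $p$ and check two things: that $\rho\ge 2d$ holds, using $d<K^2$, and that the resulting bit count is $O(dK\rho K)\subseteq O(dK^5)$. Everything else consists of direct appeals to the cited lemmas.
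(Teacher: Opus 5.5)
Your argument is the paper's proof: Lemma~\ref{lem:quotient} with $C=0$, then the bound $\eta_x<7K^3<w$ from Lemma~\ref{lem:valuation} feeding \eqref{eq:one-count}, then $\Coord$ and $\Recover$, with the packed length read off from positions below $K^3$ in base $2^{3w}$. The one point to fix is the supplied power you left open: Program SO calls $\Coord(d,K,A,q_1,q_2)$, so $p=q_1=2^{3wK}$ with $\rho=3w\ge2d$ (because $d<K^2$) and $v=q_2=q_1^K$ reused, whereas your ``natural choice'' $p=P$ would need the uncomputed power $P^K$; the binomial power $(q_1+1)^{2dK}$ then has $6wdK^2+1=48dK^5+1$ bits, which is exactly where the $dK^5$ term comes from, and a power beyond $q_1$ such as $q_2$ would overshoot that bound rather than match it, contrary to your hedge.
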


\textbf{SO1. Single-weight bases $\mathsf B_O^{(3)}(K,w)$.}
\begin{align*}
  t&=K\tsub1,& P&=2^w,& P_-&=P\tsub1,& q&=P^3,\\
  q_1&=q^K,& q_2&=q_1^K,& Q&=q_1q_2.&&
\end{align*}
\emph{Operation count:} $7$.
The subroutine returns all seven displayed values. Its squared-residual counterpart
$\mathsf B_O^{(2)}(K,w)$ uses $q=PP$ and has the same count.

\textbf{SO2. Moments.}
\[
  (U_0,U_2)=\Gpair(Q,K,t),\qquad
  (V_0,V_2)=\Gpair(q,K,t,q_1).
\]
The fourth argument supplies the previously computed power $q^K$.

\emph{Operation count:} $11+10=21$.

\textbf{SO3. Cell packing.}
\[
  T_s=\Signed(U,V).
\]
\emph{Operation count:} $8$.

\textbf{SO4. Weighted packing.}
\[
  M=\fl{T_s/(q_2\tsub1)}.
\]
\emph{Operation count:} $2$.

\textbf{SO5. Hamming weight.}
\[
  H_M=\mathsf H(M).
\]
\emph{Operation count:} $28$.

\textbf{SO6. First coordinate sum $\mathsf E_O(K,w,t,H_M)$.}
\[
  \sigma=\fl{(KK)t/2},\qquad A=\fl{H_M/w}\tsub\sigma.
\]
\emph{Operation count:} $5$.
This subroutine returns $A$.

\textbf{SO7. Second coordinate sum.}
\[
  B=\Coord(d,K,A,q_1,q_2).
\]
\emph{Operation count:} $15$.

\textbf{SO8. Fundamental solution.}
\[
  (X_1,Y_1)=\Recover(d,A,B).
\]
\emph{Operation count:} $12$.

\emph{Total operation count:}
\[
  7+21+8+2+28+5+15+12=98.
\]
There are $70$ operations outside one Hamming-weight call.

\begin{proof}
Lemma~\ref{lem:quotient} with $C=0$ repeats cell $(x,y)$ exactly
$x$ times. Its valuation error is less than $7K^3<w$, so
\eqref{eq:one-count} proves SO6.
Here $q_1=2^{3wK}$, and $3w\ge2d$ since $d<K^2$.
Thus the hypotheses of $\Coord$ hold, proving SO7.
The reconstruction subroutine then gives the fundamental solution.
The packed length follows from the position bound $K^3$.
The largest additional recovery power, $(q_1+1)^{2dK}$, has
$6wdK^2+1$ bits. All other intermediates outside Hamming weight
have $O(wK^3)$ bits.
\end{proof}

For a smaller box, the same program applies whenever
$2^w>\max|F|$, $\sum_{F<0}x\nu_2(-F)<w$, and $3w\ge2d$.

\subsection{Squared residual with the combined weight}\label{sec:qc}

Squaring $F$ removes the valuation error. The resulting packing uses
fourth moments and can take a width logarithmic in $dK$.

\begin{theorem}\label{thm:qc}
Let $K>X_1$ and $2^w\ge d^2K^4$.
Program QC below returns $(X_1,Y_1)$.
Its packed integer has at most $2w(2K+1)K^3$ binary digits.
All intermediates outside Hamming weight have $O(wK^4)$ binary digits.
\end{theorem}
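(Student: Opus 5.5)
Program QC has not yet been displayed. I will take it to be the analogue of SC with the squared packing: bases $\mathsf B_C^{(2)}(K,w)$ with $q=PP$; the triples $\Gtriple(Q_x,K,t)$ and $\Gtriple(Q_y,K,t)$; the packing $T_q=\Squared(U,V)$ with $d_2=dd$; then $M=\fl{T_q/(q_2\tsub1)}$, $H_M=\mathsf H(M)$, $(A,B)=\mathsf E_C(K,w,t,H_M)$ and $\Recover$. The proof will mirror that of Theorem~\ref{thm:sc}, with the valuation analysis removed, since every residual is now squared.

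\textbf{Correctness.} First, $Q_x=q^{K+K^2}$ and $Q_y=q^{1+2K^3}$ with $q=2^{2w}$. By Lemma~\ref{lem:moments}, each $\Gtriple$ call returns $G_0,G_2,G_4$; this needs $Q\ge7$, which holds because $q\ge2^{2w}$ is huge. The exact-subtraction remarks after $\Gtriple$ cover its subtractions.

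Next, $T_q=\mathcal P_q(Q_x,Q_y)$. This follows from the expansion of $P+1-F^2$ given for \eqref{eq:squared-packing}. The hypothesis $P=2^w\ge d^2K^4$ makes $PU_0\tsub U_4$ exact termwise, and positivity of $P+1-F^2$ makes the last subtraction exact. Since $|F|\le dK^2$, we have $0\le F^2\le P$, so Lemma~\ref{lem:digits} applies in base $P^2=q$. The digit at $(0,0)$ is below $q-1$, so Lemma~\ref{lem:quotient} with $C=2K$ identifies $M$. By \eqref{eq:combined-count} with $\eta=0$ (the nonnegative case of Lemma~\ref{lem:signed-count}),
\[
  \HW(M)=w\bigl((2K+1)\sigma+A+2KB\bigr).
\]
Hence $R_C=A+2KB$. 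Lemma~\ref{lem:sum-bound} gives $A<2K$, so the remainder and quotient in $\mathsf E_C$ return $A$ and $B$, and Theorem~\ref{lem:identity} proves the output of $\Recover$.

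\textbf{Size bounds.} The packed integer has positions below $(2K+1)K^3$ by Lemma~\ref{lem:quotient}, and each position carries $2w$ bits. This gives at most $2w(2K+1)K^3$ bits.

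For the other intermediates, the worst terms are $Z_Q=Q_y^K$, with about $2wK(1+2K^3)=O(wK^4)$ bits, and the numerator products $Z_Qg_4$ and $Z_Qg_2$. Here $g_4$ is a polynomial of degree about $4$ in $v=ut$ and $Q$, so it has $O(\log_2 Q)=O(wK^3)$ bits. The products $V_0U_4$ and the other products in $\Squared$ are bounded similarly, each by a constant times the bit length of $Q_y^K$. Constants such as $d_2$ and $2^w$ are negligible, because $w\ge\log_2(d^2K^4)$.

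\textbf{Main obstacle.} No step is conceptually hard. The step needing most care is the bookkeeping for the $O(wK^4)$ bound: checking that every product inside $\Gtriple$ and $\Squared$ multiplies at most a bounded number of factors of size $Q_y^{O(K)}$, and that no term like $u^5$ or $g_2^2$ exceeds that size. The second thing to verify is that the exactness conditions of Lemma~\ref{lem:moments} and \eqref{eq:squared-packing} hold for both bases, which the large size of $q$ guarantees.
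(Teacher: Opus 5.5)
Your proposal is correct and follows essentially the same route as the paper's proof. You identify Program QC correctly, use $F^2\le d^2K^4\le 2^w$ to get $\eta=0$ in \eqref{eq:combined-count}, recover $A,B$ from $A<2K$, finish with Theorem~\ref{lem:identity}, and bound sizes by the $(2K+1)K^3$ positions of width $2w$ together with the $O(wK^4)$-bit moment numerators, while checking details the paper's short proof leaves implicit (exactness of the subtractions in $\Gtriple$ and $\Squared$, and the digit at $(0,0)$ being below $q-1$).
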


\textbf{QC1. Bases.}
\[
  (t,P,P_-,q,q_1,q_2,q_3,Q_x,Q_y)=\mathsf B_C^{(2)}(K,w).
\]
\emph{Operation count:} $10$.

\textbf{QC2. Moments and cell packing.}
\begin{gather*}
  (U_0,U_2,U_4)=\Gtriple(Q_x,K,t),\qquad
  (V_0,V_2,V_4)=\Gtriple(Q_y,K,t),\\
  d_2=dd,\qquad T_q=\Squared(U,V).
\end{gather*}
\emph{Operation count:} $23+23+1+14=61$.

\textbf{QC3. Weighted counting and coordinate sums.}
\begin{gather*}
  M=\fl{T_q/(q_2\tsub1)},\qquad H_M=\mathsf H(M),\\
  (A,B)=\mathsf E_C(K,w,t,H_M).
\end{gather*}
\emph{Operation count:} $2+28+10=40$.

\textbf{QC4. Fundamental solution.}
\[
  (X_1,Y_1)=\Recover(d,A,B).
\]
\emph{Operation count:} $12$.

\emph{Total operation count:} $10+61+40+12=123$.
There are $95$ operations outside one Hamming-weight call.

\begin{proof}
The bound on $w$ makes $F^2\le2^w$ throughout the square.
Equation~\eqref{eq:combined-count} now has $\eta=0$.
Division and remainder recover $A,B$ because $A<2K$, and
Theorem~\ref{lem:identity} gives the output.
There are fewer than $(2K+1)K^3$ possible occupied positions,
each of width $2w$. The moment numerators have the same
$O(wK^4)$ bit-length order.
\end{proof}

\subsection{Squared residual with one coordinate sum}\label{sec:qo}

The squared residual can also be combined with recovery from $A$.
The condition $w\ge d$ ensures that the packing powers are large
enough for the binomial extractions in $\Coord$.

\begin{theorem}\label{thm:qo}
Let $K>X_1$, $w\ge d$, and $2^w\ge d^2K^4$.
Program QO below returns $(X_1,Y_1)$.
Its packed integer has at most $2wK^3$ binary digits.
The intermediates outside Hamming weight have
$O(wK^3+wdK^2)$ binary digits, and hence $O(wK^3)$ when $K\ge d$.
\end{theorem}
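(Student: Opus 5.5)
Program QO has not been displayed at this point, so I first fix it as the analogue of SO with the squared-residual subroutines. The steps are:
\begin{itemize}
\item[] \textbf{QO1.} Bases $\mathsf B_O^{(2)}(K,w)$, so $q=P^2$, $q_1=q^K$, $q_2=q_1^K$ and $Q=q_1q_2$.
\item[] \textbf{QO2.} Moments $\Gtriple(Q,K,t)$ and $\Gtriple(q,K,t,q_1)$, then $d_2=dd$ and $T_q=\Squared(U,V)$.
\item[] \textbf{QO3.} Weighted counting: $M=\fl{T_q/(q_2\tsub1)}$, $H_M=\mathsf H(M)$ and $A=\mathsf E_O(K,w,t,H_M)$.
\item[] \textbf{QO4.} Recovery: $B=\Coord(d,K,A,q_1,q_2)$.
\item[] \textbf{QO5.} Output: $(X_1,Y_1)=\Recover(d,A,B)$.
\end{itemize}
The correctness proof then chains the earlier lemmas in the same order as for SO and QC.

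First I check the moment hypotheses. Since $d\ge2$ and $K\ge3$, we have $2^w\ge d^2K^4\ge324$, so both bases $Q$ and $q$ exceed $7$. Lemma~\ref{lem:moments} therefore applies, and all subtractions in $\Gtriple$ are exact. The bases have exponents $K+K^2$ and $1$ in powers of $q$, which is Lemma~\ref{lem:quotient} with $C=0$. Its digit hypothesis holds because $|F|\le dK^2$ gives $0\le F^2\le d^2K^4\le 2^w$. Each digit $\delta_w(F^2)$ is a base-$2^{2w}$ digit by Lemma~\ref{lem:digits}, and the cell $(0,0)$ has digit below $q-1$, as noted before \eqref{eq:combined-count}. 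The condition $P\ge d^2K^4$ makes the subtractions in \eqref{eq:squared-packing} exact. Hence $T_q=\mathcal P_q(Q,q)$ and $M$ is the $C=0$ quotient. Then \eqref{eq:one-count} with $\eta_x=0$ (Lemma~\ref{lem:signed-count}, nonnegative case) gives $\HW(M)=w(\sigma+A)$, so $\mathsf E_O$ returns $A$ exactly.

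The step needing attention is the hypothesis of $\Coord$. There $p=q_1=2^{2wK}$ plays the role of $2^{\rho K}$ with $\rho=2w$, and $v=q_2=p^K$. We need $\rho\ge2d$, which is exactly the assumption $w\ge d$. Theorem~\ref{thm:coordinate} then gives $B=\fl{A/\omega}$, and Theorem~\ref{lem:identity} through $\Recover$ gives $(X_1,Y_1)$.

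For the size bounds, Lemma~\ref{lem:quotient} places all positions below $K^3$, each of width $2w$, so the packed integer has fewer than $2wK^3$ bits. The powers $Q^K$ and the moment numerators $Z_Qg_4$ have $O(wK^3)$ bits, since $g_4$ is polynomial in $Q$ of degree $4$ and adds only $O(wK^2)$ bits. In $\Coord$ the largest power is $(q_1+1)^{2dK}$, with about $4wdK^2$ bits, and $v^d$ is similar. This gives $O(wK^3+wdK^2)$ overall, and $O(wK^3)$ when $K\ge d$. The main obstacle I expect is simply verifying that $\rho=2w\ge2d$ for the binomial extraction and tracking the $dK^2$ term. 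Everything else is direct reuse of the proofs of Theorems~\ref{thm:so} and~\ref{thm:qc}.
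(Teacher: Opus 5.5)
Your proposal is correct and takes the same route as the paper. The squared digits give $\eta_x=0$ in \eqref{eq:one-count}, so $\mathsf E_O$ returns $A$. The identity $q_1=2^{2wK}$ together with $2w\ge 2d$ validates $\Coord$, and Theorems~\ref{thm:coordinate} and~\ref{lem:identity} then give the output. The size bounds come from the position bound $K^3$ in base $2^{2w}$ and from the recovery power $(q_1+1)^{2dK}$ with $4wdK^2+1$ bits. The only difference is that you explicitly check hypotheses the paper's short proof leaves implicit: $Q,q\ge7$ in Lemma~\ref{lem:moments}, the digit condition at $(0,0)$, and exactness of the subtractions in $\Squared$.
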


\textbf{QO1. Bases.}
\[
  (t,P,P_-,q,q_1,q_2,Q)=\mathsf B_O^{(2)}(K,w).
\]
\emph{Operation count:} $7$.

\textbf{QO2. Moments and cell packing.}
\begin{gather*}
  (U_0,U_2,U_4)=\Gtriple(Q,K,t),\qquad
  (V_0,V_2,V_4)=\Gtriple(q,K,t,q_1),\\
  d_2=dd,\qquad T_q=\Squared(U,V).
\end{gather*}
\emph{Operation count:} $23+22+1+14=60$.

\textbf{QO3. Weighted counting and first coordinate sum.}
\begin{gather*}
  M=\fl{T_q/(q_2\tsub1)},\qquad H_M=\mathsf H(M),\\
  A=\mathsf E_O(K,w,t,H_M).
\end{gather*}
\emph{Operation count:} $2+28+5=35$.

\textbf{QO4. Coordinate recovery and fundamental solution.}
\[
  B=\Coord(d,K,A,q_1,q_2),\qquad (X_1,Y_1)=\Recover(d,A,B).
\]
\emph{Operation count:} $15+12=27$.

\emph{Total operation count:} $7+60+35+27=129$.
There are $101$ operations outside one Hamming-weight call.

\begin{proof}
Equation~\eqref{eq:one-count} has $\eta_x=0$, proving QO3.
For coordinate recovery, $q_1=2^{2wK}$ and $2w\ge2d$.
Theorems~\ref{thm:coordinate} and~\ref{lem:identity} therefore
prove the output assignments. The quotient occupies positions below
$K^3$ in base $2^{2w}$.
The recovery power $(q_1+1)^{2dK}$ has $4wdK^2+1$ bits, giving
the additional term in the intermediate-length bound.
\end{proof}

\subsection{Squared residual with two separate coordinate sums}\label{sec:qt}

The final construction extracts $A$ and $B$ separately. It uses two
Hamming weights and does not require the binomial approximation to
$\sqrt d$ or the additional condition $w\ge d$.

\begin{theorem}\label{thm:qt}
Let $K>X_1$ and $2^w\ge d^2K^4$.
Program QT below returns $(X_1,Y_1)$.
Each of its two packed integers has at most $2wK^3$ binary digits.
All intermediates outside Hamming weight have $O(wK^3)$ binary digits.
\end{theorem}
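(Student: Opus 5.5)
Program QT is not displayed yet, so the plan is to build it from the pieces already used in QO and QC and then verify it the same way. The bases come from $\mathsf B_O^{(2)}(K,w)$, which gives $t,P,P_-,q,q_1=q^K,q_2=q_1^K$, and $Q=q_1q_2=q^{K+K^2}$. Weight $x$ uses the bases $(Q_x,Q_y)=(Q,q)$, exactly as in QO. Lemma~\ref{lem:quotient} with $C=0$ repeats cell $(x,y)$ $x$ times. For weight $y$ we transpose the strides and use $(Q_x,Q_y)=(q,Q)$. Since the residual digit $\delta_w(F(x,y)^2)$ is not symmetric in $x,y$, this means renaming cells $(x,y)\mapsto(y,x)$ inside Lemma~\ref{lem:quotient}. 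The lemma only needs the digits indexed by the square to be base-$q$ digits, not all equal to $q-1$.

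The moments need only two calls, $\Gtriple(Q,K,t)$ and $\Gtriple(q,K,t,q_1)$, because the same $U_i$ and $V_i$ serve both packings with their roles swapped. The first packing is $\Squared(U,V)$ as in QO. The second is the squared packing at $(q,Q)$. In this packing the $x$-moments $G_i(q,\cdot)$ are the old $V_i$ and the $y$-moments are the old $U_i$, so it is $\Squared(V,U)$ with the same formula \eqref{eq:squared-packing}. Correctness of the expansion and the exactness of the truncated subtractions follow verbatim from the verification after \eqref{eq:squared-packing}, under $2^w\ge d^2K^4$.

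Next we form $M_x=\fl{T_q/(q_2\tsub1)}$ and $M_y=\fl{T_q'/(q_2\tsub1)}$, take two Hamming weights, and apply \eqref{eq:one-count} and its transposed form with $\eta_x=\eta_y=0$. This gives $A=\fl{\HW(M_x)/w}\tsub\sigma$ and $B=\fl{\HW(M_y)/w}\tsub\sigma$ with $\sigma$ from \eqref{eq:sigma}. Finally $\Recover(d,A,B)$ and Theorem~\ref{lem:identity} give $(X_1,Y_1)$. No $\Coord$ call is needed, so the condition $w\ge d$ disappears, as the statement claims.

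For the size bounds, Lemma~\ref{lem:quotient} with $C=0$ places every copy at a position below $K^3$ in base $q=2^{2w}$. Each packed integer therefore has fewer than $2wK^3$ bits. The largest non-Hamming intermediates are $Q^K=q^{K^2+K^3}$ and the moment numerators $Z_Qg_4$ with $u^5$ factors. These have $O(wK^3)$ bits because $g_4$ is polynomial in $Q$ and $v$ of bounded degree. The cell packings are products of two moments and also have $O(wK^3)$ bits.

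The main obstacle is the transposition step. We must check that using $Q$ as the $y$-base still yields disjoint positions $yK+x+iK^2$ with $i<y$. We must also check that the leftover remainder stays below $q^{K^2}-1$, since cell $(0,0)$ has $F^2=1$ and so its digit is less than $q-1$. Both are instances of Lemma~\ref{lem:quotient} after relabeling, so I expect this step to go through.
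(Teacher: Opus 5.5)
Your proposal is correct and takes the same route as the paper. The first packing is the case $C=0$ of Lemma~\ref{lem:quotient}; the second, $\Squared(V,U)$ at bases $(q,Q)$, is the same lemma with positions $yK+x$. Squared digits carry no valuation error, so the two Hamming weights give $\sigma+A$ and $\sigma+B$, and Theorem~\ref{lem:identity} finishes the argument. Your further checks make explicit what the paper's terse proof leaves implicit: the expansion and exact subtractions of the transposed packing, the digit at cell $(0,0)$, and the $O(wK^3)$ sizes of $Z_Qg_4$ and the packings.
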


\textbf{QT1. Bases.}
\[
  (t,P,P_-,q,q_1,q_2,Q)=\mathsf B_O^{(2)}(K,w).
\]
\emph{Operation count:} $7$.

\textbf{QT2. Shared moments and the two cell packings.}
\begin{gather*}
  (U_0,U_2,U_4)=\Gtriple(Q,K,t),\qquad
  (V_0,V_2,V_4)=\Gtriple(q,K,t,q_1),\\
  d_2=dd,\qquad T_x=\Squared(U,V),\qquad T_y=\Squared(V,U).
\end{gather*}
Both packing calls share $d_2$, $P$, and $P_-$.

\emph{Operation count:} $23+22+1+14+14=74$.

\textbf{QT3. Weighted counting and coordinate sums.}
\begin{gather*}
  D_q=q_2\tsub1,\qquad M_x=\fl{T_x/D_q},\qquad M_y=\fl{T_y/D_q},\\
  H_x=\mathsf H(M_x),\qquad H_y=\mathsf H(M_y),\\
  \sigma=\fl{(KK)t/2},\qquad
  A=\fl{H_x/w}\tsub\sigma,\qquad B=\fl{H_y/w}\tsub\sigma.
\end{gather*}
\emph{Operation count:} $3+28+28+7=66$.

\textbf{QT4. Fundamental solution.}
\[
  (X_1,Y_1)=\Recover(d,A,B).
\]
\emph{Operation count:} $12$.

\emph{Total operation count:} $7+74+66+12=159$.
There are $103$ operations outside two Hamming-weight calls.

\begin{proof}
The first quotient is the case $C=0$ of Lemma~\ref{lem:quotient}.
For the second, use the position $yK+x$ instead of $xK+y$.
The square is unchanged, and the quotient repeats cell $(x,y)$
exactly $y$ times. The squared digits have no valuation error,
so the two Hamming weights give $\sigma+A$ and $\sigma+B$ after
division by $w$. Theorem~\ref{lem:identity} finishes the proof.
Each quotient occupies positions below $K^3$. The moment triples
are shared between the two transposed packings, and the triple at
$q$ reuses $q_1=q^K$.
\end{proof}
\section{Explicit parameters and integer sizes}\label{sec:parameters}

Table~\ref{tab:cores} compares the five programs for supplied $K,w$.
The signed programs use polynomial widths in $K$ to absorb the valuation
error. The squared programs can use a width of order $\log(dK)$,
subject to the additional condition $w\ge d$ in Program QO.
These bounds compare the inputs to Hamming weight. They do not yet
measure the largest integers formed by the complete programs. With either
parameter family below, all five programs have the same exponential
height in their full size bounds. Choosing a smaller parameter within
Hamming weight changes that height, as Table~\ref{tab:parameters} shows.

\begin{table}[htbp]
\centering
\small
\begin{tabular}{@{}lrrl@{}}
\toprule
Program & Operations & HW calls & Bits per packed integer\\
\midrule
SC: signed, combined weight &92&1&$O(K^8)$\\
SO: signed, one sum &98&1&$O(K^6)$\\
QC: squared, combined weight &123&1&$O(wK^4)$\\
QO: squared, one sum &129&1&$O(wK^3)$\\
QT: squared, two sums &159&2&$O(wK^3)$\\
\bottomrule
\end{tabular}
\caption{Counts for the five fundamental-solution programs, excluding
construction of $K,w$ and expanding each HW call by its $28$-operation
program. The width and domain are those of the corresponding theorem.
Bit lengths refer to arguments of HW. Program QT has two such integers.
The other programs have one.}
\label{tab:cores}
\end{table}

\subsection{An elementary bound and parameter programs}

We first give a bound that provides parameters by two additional
operations for any of the five programs.

\begin{proposition}\label{prop:elementary}
For nonsquare $d\ge2$, one has $\eps<\mathrm e^{4d}<64^d$.
\end{proposition}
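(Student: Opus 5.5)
Since $\mathrm e^4\approx54.6<64$, the second inequality $\mathrm e^{4d}<64^d$ is immediate, and the task is to prove $\log\eps<4d$. I will use the continued fraction expansion of $\sqrt d$. Write $\sqrt d=[a_0;\overline{a_1,\dots,a_\ell}]$ with period $\ell$, and let $p_k/q_k$ be the convergents. Then $(X_1,Y_1)$ is $(p_{\ell-1},q_{\ell-1})$ or $(p_{2\ell-1},q_{2\ell-1})$, according as $\ell$ is even or odd. The standard description of the expansion goes through the complete quotients $(P_k+\sqrt d)/Q_k$. Here $0<P_k<\sqrt d$ and $0<Q_k<2\sqrt d$, so each partial quotient satisfies $a_k\le2\sqrt d$. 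Each pair $(P_k,Q_k)$ is determined by $Q_k$ together with $P_k$, and the pairs in one period are distinct. Hence the period satisfies $\ell\le$ the number of admissible pairs. The crude count of fewer than $2d$ pairs is what I expect to need, but it must be sharpened.

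The key step is to bound $\eps$ by a product over one or two periods. Since $p_k+q_k\sqrt d\le\prod_{i\le k}(a_i+1)$ up to a factor coming from $a_0$, we get $\log\eps\le 2\ell\log(2\sqrt d+1)$ roughly. With $\ell\sim d$ this yields $d\log d$, not $4d$. This is the main obstacle, so the crude product route must be replaced.

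The route I would actually take bounds the product $\prod_{i=1}^{\ell}(P_i+\sqrt d)/Q_i$ directly, using the classical identity $\eps_0=\prod_{k=1}^{\ell}\frac{P_k+\sqrt d}{Q_k}$ for $\eps_0=p_{\ell-1}+q_{\ell-1}\sqrt d$. Here $\eps=\eps_0$ or $\eps_0^2$, and in the latter case $\ell$ is odd. The indices split in two ways.
\begin{itemize}
\item When $Q_k\ge\sqrt d$, the factor is at most $2$.
\item When $Q_k<\sqrt d$, the pairs are reduced quadratic irrationals, with $Q_k\mid d-P_k^2$ and $\sqrt d-Q_k<P_k<\sqrt d$. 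Since all pairs in a period are distinct, the factors can be bounded by comparing them with the divisor structure. Each such pair contributes a factor at most $2\sqrt d/Q_k$, and for a given value of $Q_k$ there are at most $Q_k$ admissible values of $P_k$.
\end{itemize}

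A cleaner alternative, if this gets messy, is to avoid continued fractions and use the analytic formula $\log\eps\le \sqrt d\,(\tfrac12\log d+1)$ derived from $h\log\eps=\sqrt d\,L(1,\chi)$ with $h\ge1$. This gives $\log\eps\le\sqrt d(\log(4d)/2+1)$ in elementary form, and the bound $\sqrt d(\tfrac12\log 4d+1)<4d$ then follows for all $d\ge2$ by checking a one-variable inequality. The paper mentions Hua's bound separately, so presumably an elementary route is intended. I would therefore first try to close the continued-fraction estimate so that each period position contributes at most a factor $\mathrm e^{2}$ on average, over at most $2d$ positions when the doubled period is counted. If that fails, I would fall back on the analytic class number formula argument.
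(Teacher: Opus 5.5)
Your setup is the paper's: the identity $\eps_0=\prod_{k=1}^{\ell}(P_k+\sqrt d)/Q_k$ over a least period, $\eps\le\eps_0^2$, reducedness, distinctness of the pairs $(P_k,Q_k)$, and the factor bound $2\sqrt d/Q_k$. The gap is that you never derive $\log\eps_0<2d$, and the plan you state for it does not work as formulated.

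\textbf{The continued-fraction estimate.} Counting admissible reduced pairs gives about
\[
\sum_{Q\le\sqrt d}Q+\lfloor\sqrt d\rfloor\bigl(\lfloor2\sqrt d\rfloor-\lfloor\sqrt d\rfloor\bigr)\approx\tfrac32d
\]
positions per period. So nothing you have bounds the doubled period by $2d$ positions. An average factor $\mathrm e^2$ over about $3d$ positions would only give $\mathrm e^{6d}$.

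The missing step is to group factors by their denominator instead of counting positions. Since $1\le P_k\le m_0=\lfloor\sqrt d\rfloor$ and the pairs are distinct, each value $Q$ occurs at most $m_0$ times, with factor $<2\sqrt d/Q$. Since $2\sqrt d/Q>1$ for every $Q\le L_0=\lfloor2\sqrt d\rfloor$, you may insert all missing factors:
\[
  \eps_0<\prod_{Q=1}^{L_0}\Bigl(\frac{2\sqrt d}{Q}\Bigr)^{m_0}
  =\Bigl(\frac{(2\sqrt d)^{L_0}}{L_0!}\Bigr)^{m_0}
  <\mathrm e^{2\sqrt d\,m_0}<\mathrm e^{2d}.
\]
The middle inequality bounds one term of the exponential series by the whole series. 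Squaring gives the claim; this is the paper's argument.

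Your split can also be finished. For $Q_k<\sqrt d$ there are at most $Q$ positions per $Q$, and $Q\log(2\sqrt d/Q)\le2\sqrt d/\mathrm e$, so this part contributes at most $2d/\mathrm e$. At most $d+\sqrt d$ positions have $Q_k>\sqrt d$, each contributing less than $\log2$. Together, $\log\eps_0<1.43d+0.7\sqrt d<2d$ for $d\ge2$. Either way, some such computation is the whole proof, and it is absent.

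\textbf{The analytic fallback.} It is misstated. At discriminant $4d$, Dirichlet's formula reads $hR=\sqrt d\,L(1,\chi_{4d})$, where $R$ is the logarithm of the least unit $>1$ of $\Z[\sqrt d]$ of either norm. Since $\eps$ is that unit or its square, only $\log\eps\le2\sqrt d\,L(1,\chi_{4d})$ follows. Your inequality $\log\eps\le\sqrt d\,L(1,\chi)$ already fails at $d=2$, where $\eps=(1+\sqrt2)^2$.

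With the correct constant you get Hua's bound $\log\eps<\sqrt d(\log(4d)+2)$, twice what you wrote. This still lies below $4d$ for all $d\ge2$, so the fallback survives after correction. However, it needs the class number formula for the possibly non-maximal order $\Z[\sqrt d]$ with an imprimitive character, plus an explicit bound on $L(1,\chi)$. In effect it cites Hua's theorem rather than giving the elementary proof this proposition is meant to supply.
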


\begin{proof}
Put $\theta=\sqrt d$ and $m_0=\fl\theta$.
Let $\zeta_1,\ldots,\zeta_\ell$ be the complete quotients in the
least period of the continued fraction of $\sqrt d$.
The complete quotients have the form
$\zeta_i=(\theta+a_i)/b_i$. In the periodic part they satisfy
$\zeta_i>1$ and $-1<\overline{\zeta_i}<0$.
The continued-fraction identities give
\[
  1\le a_i\le m_0,\qquad 1\le b_i<2\theta,
\]
and the pairs $(a_i,b_i)$ in a least period are distinct.
These standard identities and the Pell identities for convergents are
recorded, for example, in \cite{HardyWright}.
For the convergent $p_{\ell-1}/q_{\ell-1}$ they give
\[
  p_{\ell-1}-\theta q_{\ell-1}
    =\frac{(-1)^\ell}{\zeta_1\cdots\zeta_\ell},\qquad
  p_{\ell-1}^2-dq_{\ell-1}^2=(-1)^\ell.
\]
Consequently
$u_0=\prod_{i=1}^{\ell}\zeta_i=p_{\ell-1}+q_{\ell-1}\sqrt d$.
Its square is a nontrivial positive norm-one unit in $\Z[\sqrt d]$,
so $\eps\le u_0^2$. This argument does not require $d$ to be squarefree.

Set $L_0=\fl{2\theta}$. Each denominator $b_i$ occurs at most
$m_0$ times, and $1<\zeta_i<2\theta/b_i$.
All factors $2\theta/Q$ for $1\le Q\le L_0$ exceed one.
Adding missing factors gives
\[
  u_0<\prod_{Q=1}^{L_0}\left(\frac{2\theta}{Q}\right)^{m_0}
      =\left(\frac{(2\theta)^{L_0}}{L_0!}\right)^{m_0}
      <\mathrm e^{2\theta m_0}<\mathrm e^{2d}.
\]
The penultimate inequality bounds one term of the exponential series
by its full sum. Squaring proves the first bound, and
$\mathrm e^4<64$ proves the second.
\end{proof}

The elementary parameter programs are as follows. Assignments taken
from a main program remain counted there, even if moved earlier to
form its width.

\textbf{E-SC. Parameters for Program SC.}
\[
  K=64^d,\qquad w=C^4,
\]
where the assignment $C=2K$ from SC6 is performed before computing $w$.

\emph{Additional operation count:} $2$.
The complete program in $d$ has $92+2=94$ operations.

\textbf{E-SO. Parameters for Program SO.}
\[
  K=64^d,\qquad w=h^3,
\]
where $h=2K$ is the first assignment of $\Coord$ in SO7 and is
performed before computing $w$.

\emph{Additional operation count:} $2$.
The complete program in $d$ has $98+2=100$ operations.

\textbf{E-Q. Parameters for each of Programs QC, QO, QT.}
\[
  K=64^d,\qquad w=26d.
\]
\emph{Additional operation count:} $2$.
The complete programs have respectively $125$, $131$, and $161$ operations.

Proposition~\ref{prop:elementary} gives $K>X_1$, and these choices
satisfy $K\ge4096$ and $K\ge d$.
For the squared programs,
\[
  d^2K^4=d^2 2^{24d}\le2^{26d}=2^w,
\]
because $d\le2^d$. The condition $w\ge d$ also holds.
All five programs with these parameters have $2^{O(d)}$ packed bits.
The constants in the exponent depend on the chosen construction.

\subsection{Parameters from Hua's bound}

Hua's bound \cite{Hua42}, in the form for Pell's equation stated in
\cite[p.~185]{Len02}, is
\begin{equation}\label{eq:hua}
  \log\eps<\sqrt d\bigl(\log(4d)+2\bigr),\qquad
  X_1<(4\mathrm e^2d)^{\sqrt d}.
\end{equation}
Here $\eps$ is the least unit greater than one of norm $+1$ in
$\Z[\sqrt d]$. In the discriminant formulation
$u^2-\Delta v^2=4$, the choice $\Delta=4d$ gives $u=2X_1$, $v=Y_1$,
and $(u+v\sqrt\Delta)/2=\eps$.
The regulator in the quoted bound belongs to the norm-one subgroup.
If the least unit greater than one has norm $-1$, its square is $\eps$,
but \eqref{eq:hua} already bounds $\log\eps$, so its constant is not doubled.
To turn the exponent bound into a term, we use the same central-binomial
ratio as in coordinate recovery.

\begin{lemma}\label{lem:root-parameter}
For $d\ge2$, let $r=\fl{R_d}$, where
$R_d=4^d/\binom{2d}{d}$. Then $\sqrt d<r\le2\sqrt d$.
\end{lemma}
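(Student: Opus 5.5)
We need to show $\sqrt d<r\le 2\sqrt d$, where $r=\fl{R_d}$. The plan is to use the monotonicity facts about $R_j$ already established in the proof of Theorem~\ref{thm:coordinate}. There it was shown that $R_j^2/j$ strictly decreases and that $R_j^2/(j+2/7)$ is nondecreasing in $j$.

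Since $R_1=2$, the first fact gives $R_d^2/d\le R_1^2/1=4$ for all $d\ge1$. Hence $R_d\le2\sqrt d$ and so $r\le R_d\le2\sqrt d$.

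For the lower bound, the nondecreasing property gives
\[
  R_d^2\ge\frac{R_1^2}{1+2/7}\Bigl(d+\frac27\Bigr)=\frac{28d+8}{9}.
\]
This exceeds $d$ comfortably, but we need $\fl{R_d}>\sqrt d$, which requires a margin of one. It suffices to prove $R_d\ge\sqrt d+1$, i.e.\ $R_d^2\ge d+2\sqrt d+1$. From the bound above, it is enough that
\[
  \frac{28d+8}{9}\ge d+2\sqrt d+1,\qquad\text{i.e.}\qquad 19d-18\sqrt d-1\ge0.
\]
Treating this as a quadratic in $s=\sqrt d$, the roots of $19s^2-18s-1$ are $s=1$ and $s=-1/19$, so the inequality holds for all $s\ge1$. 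Equality occurs only at $d=1$. Thus for $d\ge2$ we get $R_d>\sqrt d+1$, and therefore $r>R_d-1>\sqrt d$.

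The main point to check is that the sharper monotonicity statement is valid starting from $j=1$. The paper's identity $4(j+1)^2(j+2/7)-(2j+1)^2(j+1+2/7)=(j-1)/7\ge0$ does give $R_{j+1}^2/(j+1+2/7)\ge R_j^2/(j+2/7)$ for every $j\ge1$. As a sanity check, $d=2$ gives $R_2=16/6=8/3$, so $r=2$, and indeed $\sqrt2<2\le2\sqrt2$.
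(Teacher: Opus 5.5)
Your proof is correct and follows essentially the paper's route: both arguments bound $R_d$ by an inductive comparison based on $R_{j+1}/R_j=2(j+1)/(2j+1)$ starting from $R_1=2$, and both use the same upper-bound step $4j(j+1)<(2j+1)^2$. The only difference is in the lower bound. You reuse the monotonicity of $R_j^2/(j+2/7)$ from the proof of Theorem~\ref{thm:coordinate}, which gives $R_d^2\ge(28d+8)/9$ and needs your quadratic check in $\sqrt d$. The paper instead proves the fresh bound $R_j\ge\sqrt{3j+1}$, for which $\sqrt{3d+1}>\sqrt d+1$ holds immediately when $d>1$.
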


\begin{proof}
The recurrence $R_{j+1}/R_j=2(j+1)/(2j+1)$ and induction give
\[
  \sqrt{3j+1}\le R_j\le2\sqrt j\qquad(j\ge1).
\]
For the lower bound the inductive comparison is
\[
  4(j+1)^2(3j+1)-(2j+1)^2(3j+4)=j\ge0.
\]
For the upper bound it is $4j(j+1)<(2j+1)^2$.
Since $\sqrt{3d+1}>\sqrt d+1$ for $d>1$, taking the floor proves
$r>\sqrt d$. The upper bound follows directly.
\end{proof}

\textbf{H-P1. Rational upper bound for the exponent.}
\[
  h_d=2d,\qquad L_d=2^{h_d},\qquad
  c_d=\fl{(L_d+1)^{h_d}/L_d^d}\md L_d,
  \qquad r=\fl{L_d/c_d}.
\]
\emph{Operation count:} $1+1+5+1=8$.
Digit extraction gives $c_d=\binom{2d}{d}$, so the last output
is the $r$ of Lemma~\ref{lem:root-parameter}.

\textbf{H-P2. Square size.}
\[
  K=(32d)^r.
\]
\emph{Operation count:} $2$.
Because $32>4\mathrm e^2$ and $r>\sqrt d$,
\eqref{eq:hua} gives $K>X_1$.
Also $K\ge4096$, $K\ge d$, and $\log_2K=O(\sqrt d\log d)$.

\textbf{H-SC. Width for Program SC.}
\[
  w=C^4,
\]
reusing the assignment $C=2K$ from SC6 before constructing the width.

\emph{Additional width count:} $1$.
Including H-P1 and H-P2, the complete program has
$92+8+2+1=103$ operations.

\textbf{H-SO. Width for Program SO.}
\[
  w=h^3,
\]
reusing the assignment $h=2K$ from SO7 before constructing the width.

\emph{Additional width count:} $1$.
The complete program has $98+8+2+1=109$ operations.

\textbf{H-Q. Width for each of Programs QC, QO, QT.}
\[
  w=(4r+2)(d+5).
\]
\emph{Operation count:} $4$.
Together with H-P1 and H-P2, these parameters cost $14$ operations.
The complete programs have respectively $137$, $143$, and $173$ operations.

Indeed,
\[
  d^2K^4\le(32d)^{4r+2}
  \le2^{(4r+2)(d+5)}=2^w,
\]
using $\log_2(32d)\le d+5$. Also $w\ge d$.
All five choices have $2^{O(\sqrt d\log d)}$ packed bits.
No further Hamming weight is used to construct these parameters.
The complete counts are collected in Table~\ref{tab:parameters}.

\begin{table}[htbp]
\centering
\small
\begin{tabular}{@{}lrrrr@{}}
\toprule
&\multicolumn{2}{c}{Elementary parameters}&\multicolumn{2}{c}{Hua parameters}\\
\cmidrule(lr){2-3}\cmidrule(l){4-5}
Program & Default HW & Smaller $e$ & Default HW & Smaller $e$\\
\midrule
SC &94&98&103&107\\
SO &100&103&109&112\\
QC &125&129&137&141\\
QO &131&134&143&146\\
QT &161&166&173&178\\
\bottomrule
\end{tabular}

\medskip
Largest intermediate bit-length bounds for all five programs:
\medskip

{\renewcommand{\arraystretch}{1.6}
\begin{tabular}{@{}lll@{}}
\toprule
Parameters & Default HW & Smaller $e$\\
\midrule
Elementary & $2^{2^{2^{O(d)}}}$ & $2^{2^{O(d)}}$\\
Hua & $2^{2^{2^{O(\sqrt d\log d)}}}$ & $2^{2^{O(\sqrt d\log d)}}$\\
\bottomrule
\end{tabular}}
\caption{Operation counts and full intermediate-size bounds for the
fundamental-solution programs on nonsquare $d\ge2$. Counts include
parameter construction and the arithmetic expansion of HW. The default
uses $e=2M$ for each HW input $M$. The smaller choices of $e$ are HW-C,
HW-O, and HW-T in Section~\ref{sec:expanded-size}. The size bounds apply
to the entire program, with constants depending on the construction.}
\label{tab:parameters}
\end{table}

\subsection{Lengths after expanding Hamming weight}\label{sec:expanded-size}

The preceding bit bounds concern the arguments of the Hamming Weight function $\mathsf H$. The arithmetic expansion of $\mathsf H(M,e)$ forms much larger integers.
Its binomial-extraction power $(2^{2M}+1)^{2M}$ has $4M^2+1$ bits.
The gcd numerator is a power of two with exponent
\[
  \alpha\Pi(\alpha\Pi+\alpha+\Pi)<2^{4e+1}.
\]
When $e=2M$, we have $\Pi=4^M$ and $\alpha=\binom{2M}{M}$.
The gcd numerator exponent is then asymptotic to
$2^{8M}/(\pi M)$, by the central-binomial asymptotic.
This integer is larger than the binomial-extraction power. If $M$ has
at most $\lambda$ bits, the default HW expansion has at most
$2^{2^{O(\lambda)}}$ bits in any intermediate. Substituting the packed
lengths gives the triply exponential bounds
\[
  2^{2^{2^{O(d)}}}\quad\hbox{and}\quad
  2^{2^{2^{O(\sqrt d\log d)}}}
\]
for elementary and Hua parameters, respectively.

The default $e=2M$ uses the value of the packed integer as a bound for
its Hamming weight. The digit construction gives a much smaller bound
directly from the number of repeated cells. Using that bound in the gcd
step makes the binomial-extraction power the largest intermediate in HW
and removes one exponential from the full size bound.
Smaller admissible values of $e$ follow from the known number of
occupied digit positions. They give the following substitutions in the
five straight-line programs. Before each HW call, move the assignments
for $\sigma$, $2K$, and, when used, $C_+$ ahead of that call.
These quantities depend on $K$ and remain counted once in the main
program.

\textbf{HW-C. For SC and QC.} Reuse $C=2K$, $C_+=C+1$, and
$\sigma$, and compute
\[
  e=wC_+(\sigma+C).
\]
Replace the corresponding call by $\mathsf H(M,e)$.

\emph{Additional operation count:} $3$ for $e$ and $1$ inside HW,
giving totals $96$ for SC and $127$ for QC, before parameters.
For SC the totals including parameters are $98$ and $107$.

\textbf{HW-O. For SO and QO.} Reuse $h=2K$ from $\Coord$ and
compute
\[
  e=w(\sigma+h).
\]
Replace the corresponding call by $\mathsf H(M,e)$.

\emph{Additional operation count:} $2$ for $e$ and $1$ inside HW,
giving totals $101$ for SO and $132$ for QO, before parameters.

\textbf{HW-T. For QT.} Compute
\[
  h_T=2K,\qquad e=w(\sigma+h_T),
\]
and replace the two calls by $\mathsf H(M_x,e)$ and $\mathsf H(M_y,e)$.

\emph{Additional operation count:} $3$ for $e$ and $1$ in each
separate HW expansion, giving $164$ operations before parameters.

\begin{proposition}\label{prop:expanded-size}
The three choices above satisfy \eqref{eq:e-condition} for their
respective arguments. With these choices and $K\ge d$, the largest
intermediate bit lengths are bounded by
\[
  2^{O(K^8)}\ \text{for SC},\qquad
  2^{O(K^6)}\ \text{for SO},\qquad
  2^{O(wK^4)}\ \text{for QC},\qquad
  2^{O(wK^3)}\ \text{for QO and QT}.
\]
Under the elementary parameters these are $2^{2^{O(d)}}$, and
under the Hua parameters they are $2^{2^{O(\sqrt d\log d)}}$.
\end{proposition}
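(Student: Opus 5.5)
The plan is to check the two inequalities in \eqref{eq:e-condition} separately for each choice of $e$, using the exact Hamming-weight formulas \eqref{eq:combined-count} and \eqref{eq:one-count}. After that, the intermediate lengths inside the expanded $\mathsf H(M,e)$ follow from the explicit exponents already recorded in Section~\ref{sec:expanded-size}.

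\textbf{Lower inequality $\HW(M)<e$.} For HW-C, equation \eqref{eq:combined-count} gives
\[
  \HW(M)=w\bigl(C_+\sigma+A+CB\bigr)+\eta .
\]
Lemma~\ref{lem:sum-bound} gives $A,B\le C-1$, so $A+CB\le C^2-1$. The error satisfies $\eta<w$, by Lemma~\ref{lem:valuation} for SC and because $\eta=0$ for QC. Hence
\[
  \HW(M)<w\bigl(C_+\sigma+C^2\bigr)<wC_+(\sigma+C)=e .
\]
For HW-O and HW-T the count \eqref{eq:one-count} gives $\HW(M)=w(\sigma+A)+\eta$, and its transposed form gives $w(\sigma+B)+\eta$. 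Here $\eta<w$ for SO and $\eta=0$ for QO and QT. Since $A+1<\tfrac32K+1\le2K=h$, and $B<A$, we get $\HW(M)<w(\sigma+h)=e$ in all three programs.

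\textbf{Upper inequality $e\le2M$.} I will compare $e$ with the main term of the Hamming weight. Since $K\ge3$, we have $\sigma=K^2(K-1)/2\ge 2K=C$, and in the single-coordinate case $\sigma\ge h$. Therefore
\[
  e\le 2wC_+\sigma\le2\HW(M)\quad\text{(combined)},\qquad e\le2w\sigma\le2\HW(M)\quad\text{(single)}.
\]
Finally $\HW(M)\le\log_2M+1\le M$ for $M\ge1$, which gives $e\le2M$. I also need $M\ge1$, which is clear because the digit at cell $(0,0)$ is nonzero.

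\textbf{Size bounds.} Next I bound the intermediates inside the expanded HW.

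\begin{itemize}
\item \emph{Binomial-extraction power.} $(2^{2M}+1)^{2M}$ has $4M^2+1=2^{O(\lambda)}$ bits, where $\lambda$ is the bit length of $M$. The theorems of Section~\ref{sec:programs} give $\lambda=O(K^8)$ for SC, $O(K^6)$ for SO, $O(wK^4)$ for QC, and $O(wK^3)$ for QO and QT.
\item \emph{Gcd step.} Its numerator exponent is below $2^{4e+1}$, and the denominators are smaller. The exponent $e$ is $O(wK^4)$ or $O(wK^3)$, and $w=O(K^4)$ or $O(K^3)$ for the signed programs. So this step also has $2^{O(\lambda)}$ bits.
\item \emph{Remaining steps.} Here $g^e=2^{\nu e}$ with $\nu<e$, and $u^2$ has $2e$ bits. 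These are polynomial in the parameters.
\item \emph{Outside HW.} Lengths are $O(K^8)$, $O(K^6)$, $O(wK^4)$, and $O(wK^3)$. For SO and QO this uses the assumption $K\ge d$ to absorb the $\Coord$ power.
\end{itemize}

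This yields the four displayed bounds. Substituting the parameters finishes the proof. The elementary choices have $K=64^d$ and $w\in\{(2K)^4,(2K)^3,26d\}$, so the exponents are $2^{O(d)}$. The Hua choices have $\log K=O(\sqrt d\log d)$ and $w=O(\sqrt d\,d)$ or polynomial in $K$, so the exponents are $2^{O(\sqrt d\log d)}$.

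The main obstacle I expect is the lower inequality for SC. It needs the exact bookkeeping $A+CB+\eta/w<C_+C$. That has to combine $A,B<C$ with the strict bound $\eta<w$ from Lemma~\ref{lem:valuation}, and the floor division must not mask an off-by-one.
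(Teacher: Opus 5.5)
Your proof is correct. The lower inequality $\HW(M)<e$ is handled as in the paper, but you take a different route for $e\le 2M$ and for the size of the gcd step.

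\textbf{Upper inequality.} The paper locates the last copy of cell $(K-1,K-1)$, at position $p_C=(2K+1)(K-1)K^2-1$ or $p_O=(K-1)K^2-1$. It deduces $M\ge q^{p}\ge 2^{2wp}$ and checks $e<wp$. You instead read $e\le 2wC_+\sigma\le 2\HW(M)$, respectively $e\le 2w\sigma\le 2\HW(M)$, directly off the counting identity, using $\sigma\ge 2K$ for $K\ge3$. You then finish with $\HW(M)\le M$. This needs no position bookkeeping and is shorter.

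\textbf{Gcd step.} The paper's position estimate yields more than $e\le 2M$: it gives $2^{4e+1}<4M^2$. So the gcd numerator is shorter than the binomial-extraction power, which is the largest integer inside each HW call; this is the fact the paper cites when it says the smaller $e$ makes that power dominant. Your bound $e\le 2\HW(M)\le 2\lambda$ only gives $2^{4e+1}\le 2^{8\lambda+1}$, which is not dominance. You therefore bound the gcd step separately, through the explicit polynomial size of $e$ ($O(wK^4)$ or $O(wK^3)$). For the $2^{O(\cdot)}$ bounds the proposition actually asserts, this suffices.

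\textbf{Minor slip.} Cell $(0,0)$ has weight $x+Cy=0$, so its digit is never copied into $M$. Its nonzero digit therefore does not show $M\ge1$. The fact follows anyway from $\HW(M)\ge w\sigma>0$, which your argument already contains.
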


\begin{proof}
For the combined weight, $A,B<C$, so the integer
$A+CB\le C(C+1)-1$. Equation~\eqref{eq:combined-count}, with
$0\le\eta<w$, therefore gives $\HW(M)<w(C+1)(\sigma+C)$.
For a single coordinate weight, $A,B<2K$ gives
$\HW(M)<w(\sigma+2K)$, including when its error is nonzero.

The last copy of cell $(K-1,K-1)$ occurs at position
\[
  p_C=(2K+1)(K-1)K^2-1
\]
for the combined weight, and at
$p_O=(K-1)K^2-1$ for either single weight.
Its digit is positive, so $M\ge q^p$ for the relevant position $p$.
For $K\ge3$,
\[
  (2K+1)(\sigma+2K)<p_C,\qquad \sigma+2K<p_O.
\]
For the first inequality the difference is
$(2K+1)(K^2(K-1)/2-2K)-1>0$.
For the second it is $K^2(K-1)/2-2K-1>0$.
Thus $e<wp$, and $e\le2M$ follows from $M\ge2^{2wp}$.
Also
\[
  2^{4e+1}<2^{4wp+2}\le4M^2.
\]
The gcd numerator has at most $2^{4e+1}$ bits, so within each HW
call it is smaller than the binomial-extraction power.
If the packed argument has at most $\lambda$ bits, the expanded
length is consequently $2^{O(\lambda)}$.
The bounds from the five theorems and the parameter programs give
the stated conclusions.
\end{proof}

\section{Exact verification}\label{sec:verification}

The verification code, recorded output, and instructions for reproducing
the checks are available in the accompanying GitHub repository.\footnote{\href{https://github.com/bogdan27182/pell-paper}{Pell verification code}.}

The scripts evaluate the displayed arithmetic subroutines on
exact integers and count their operations. Continued fractions supply
independent fundamental solutions, while multiplication by
$X_1+Y_1\sqrt d$ supplies the comparison sequences and coordinate sums.
The checks support the formulas on finite sets of inputs. The proofs
establish them over their stated domains.

The checks cover the moment formulas and their omitted tails, the signed
digit identity, valuation bounds on $256$ squares, and coordinate recovery
with exact binomial coefficients on $264$ squares. Parameter programs are
checked for $2\le d\le300$, and the continued-fraction bound is checked
for all $1956$ nonsquare $d\le2000$. The repository specifies the ranges for
each check and records the individual subroutine counts.

There are $30$ evaluations of the five complete Pell programs, including
squares with several nontrivial solutions. Their largest packed integer
has $96025942$ bits. These evaluations use exact binary digit counting
in place of the arithmetic HW expansion, which is checked separately on
small inputs because its intermediate integers are much larger. The
reported complete counts combine the operations outside HW with its
separately checked expansion. Small signed examples with $K<64$ use
widths satisfying the actual error condition, rather than the uniform
width theorem for $K\ge64$.

For the general-solution program in Section~\ref{sec:general}, the $54$
nonsquare values $d<200$ with $X_1\le50$ give $432$ evaluations at
$1\le n\le8$, each with $21$ additional operations. For the same values
of $d$, all $67240$ pairs $(d,b)$ with
$2X_1\le b<2X_1(X_1+1)-1$ fail to return both coordinates at $n=1$.

\section{Questions about intermediate size}\label{sec:questions}

The five constructions show how the choice of residual and weight affects
the number of operations and the length of the packing. They do not
establish an optimal operation count. Their differences in packed size
also leave the exponential height of the full bounds unchanged, because
Hamming weight dominates them all. The larger change is the choice of
parameter inside that subroutine, which trades a few operations for one
exponential level in Table~\ref{tab:parameters}.

Two size questions remain. First, the packings have exponentially many
bits in an a priori bound for $\log X_1$, whereas the output has
$O(\sqrt d\log d)$ bits. It is open here whether one can obtain the
needed coordinate sums by an arithmetic term with substantially smaller
intermediates. Second, even with a supplied small $e$, the HW program
forms an integer with $4M^2+1$ bits. This is polynomial in the value of
its input $M$, rather than in its bit length. A term for HW with
intermediate bit lengths polynomial in $\log M$ would change the size
bounds of all five constructions.

One formulation of the second question uses conversion of binary digits
to another base. Let $m=\sum_j\xi_j2^j$, with $\xi_j\in\{0,1\}$,
and choose an integer $\kappa\ge3$ with
$\kappa-1>\log_2m+1$. Put
$\mathcal D_\kappa(m)=\sum_j\xi_j\kappa^j$.
Direct summation of the contributions of each binary digit gives
\[
  (\kappa-2)\sum_{k\ge0}\fl{m/2^k}\kappa^k
    =\kappa\mathcal D_\kappa(m)-2m,\qquad
  \HW(m)=\mathcal D_\kappa(m)\md(\kappa-1).
\]
Thus an arithmetic term for this digit conversion, together with an
admissible choice of $\kappa$ and intermediate bit lengths polynomial
in $\log m$, would give such a Hamming-weight term.

\section{The general solution}\label{sec:general}

Once $(X_1,Y_1)$ is available, the recurrence
\[
  X_{n+2}=2X_1X_{n+1}-X_n,\qquad
  Y_{n+2}=2X_1Y_{n+1}-Y_n
\]
has rational generating functions
\[
  \mathrm{GF}_X(z)=\frac{1-X_1z}{1-2X_1z+z^2},\qquad
  \mathrm{GF}_Y(z)=\frac{Y_1z}{1-2X_1z+z^2}.
\]
Prunescu and Sauras-Altuzarra \cite[Cor.~37]{PSA25} obtain, for a
sufficiently large integer base $b$ and $n\ge1$,
\begin{align}
  X_n&=\fl{\frac{b^{n^2+2n}-X_1b^{n^2+n}}
                  {b^{2n}-2X_1b^n+1}}\md b^n,\label{eq:general-x}\\
  Y_n&=\fl{\frac{Y_1b^{n^2+n}}
                  {b^{2n}-2X_1b^n+1}}\md b^n.\label{eq:general-y}
\end{align}
We determine a common base from $X_1$ and prove its minimality.
A base is called \emph{admissible} here if the displayed denominators
are positive and both formulas give the required coordinates for all
$n\ge1$. This is the domain needed to interpret the formulas as
arithmetic terms on natural numbers.

\subsection{The least admissible common base}

The generating functions explain why a base can fail. Terms beyond the
coefficient being extracted contribute a positive tail. If that tail
reaches one, it changes the integer part before reduction modulo the
base. A tail smaller than one proves sufficiency, but necessity needs
more: a larger tail might contribute an entire multiple of the modulus
and disappear. We use both coordinates together to rule out this
possibility, exploiting $\gcd(X_1,Y_1)=1$.

\begin{lemma}\label{lem:tail}
Let $\Lambda=b^n>2X_1$, and define
\[
  T_n=\sum_{i\ge1}X_{n+i}\Lambda^{-i},\qquad
  U_n=\sum_{i\ge1}Y_{n+i}\Lambda^{-i}.
\]
The floor in \eqref{eq:general-x} equals
$\sum_{j=0}^n X_j\Lambda^{n-j}+\fl{T_n}$.
If $X_n<\Lambda$ and $T_n<1$, formula \eqref{eq:general-x}
returns $X_n$.
The corresponding statements hold for $Y_n$ and $U_n$.
\end{lemma}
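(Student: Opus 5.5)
The plan is to expand the rational function in the floor of \eqref{eq:general-x} as a Laurent series in $\Lambda^{-1}$, using the generating function $\mathrm{GF}_X$. Write $\Lambda=b^n$. Dividing numerator and denominator by $\Lambda^{2}$, the quotient becomes
\[
  \Lambda^{n}\cdot\frac{1-X_1\Lambda^{-1}}{1-2X_1\Lambda^{-1}+\Lambda^{-2}}
  =\Lambda^n\,\mathrm{GF}_X(\Lambda^{-1}).
\]
The first step is convergence. The roots of $1-2X_1z+z^2$ are $\eps^{-1}$ and $\eps$, so the power series $\mathrm{GF}_X(z)=\sum_{j\ge0}X_jz^j$ converges for $|z|<\eps^{-1}$. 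Since $\eps=X_1+Y_1\sqrt d<2X_1<\Lambda$, the point $z=\Lambda^{-1}$ lies in the disc of convergence. The same inequality $\Lambda>2X_1>\eps$ gives positivity of the denominator $\Lambda^2-2X_1\Lambda+1=(\Lambda-\eps)(\Lambda-\eps^{-1})>0$, which is needed for the floor to be interpreted as integer division.

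Then I split the series at index $n$:
\[
  \Lambda^n\sum_{j\ge0}X_j\Lambda^{-j}
  =\sum_{j=0}^nX_j\Lambda^{n-j}+\sum_{i\ge1}X_{n+i}\Lambda^{-i}.
\]
The first part is an integer and the second is $T_n\ge0$, so the floor equals the first part plus $\fl{T_n}$. This is the first claim. If $T_n<1$, then $\fl{T_n}=0$. Reducing modulo $\Lambda=b^n$ kills every term with $j<n$, leaving $X_n\md\Lambda=X_n$ because $X_n<\Lambda$.

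For $Y_n$ the same argument applies with $\Lambda^n\,\mathrm{GF}_Y(\Lambda^{-1})$. I check that the numerator $Y_1b^{n^2+n}$ over $b^{2n}-2X_1b^n+1$ equals $\Lambda^{n}\cdot Y_1\Lambda^{-1}/(1-2X_1\Lambda^{-1}+\Lambda^{-2})$. The exponents work out: $n^2+n-2n=n^2-n=n(n-1)$, which is the power $\Lambda^{n-1}$. With $Y_0=0$ the split is identical.

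The only delicate point is justifying the series identity, namely convergence at $z=\Lambda^{-1}$ and the matching of exponents. Everything else is bookkeeping, so I expect no real obstacle beyond the bound $\eps<2X_1$.
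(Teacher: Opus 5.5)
Your proposal is correct and follows essentially the same route as the paper: you write the quotient as $\Lambda^n\mathrm{GF}_X(1/\Lambda)$, justify convergence via $\eps<2X_1<\Lambda$, split off the integer head $\sum_{j=0}^n X_j\Lambda^{n-j}$ so that only $\fl{T_n}$ can change the floor, and then reduce modulo $\Lambda$. You also check explicitly the exponent bookkeeping and the positivity of the denominator, $(\Lambda-\eps)(\Lambda-\eps^{-1})>0$, which the paper leaves implicit.
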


\begin{proof}
Since $\eps<2X_1<\Lambda$, the generating-function series converge
at $1/\Lambda$. The fraction in \eqref{eq:general-x} is
\[
  \Lambda^n\mathrm{GF}_X(1/\Lambda)
   =\sum_{j=0}^nX_j\Lambda^{n-j}+T_n.
\]
The finite sum is an integer. If $T_n<1$, adding the tail does not
change its floor. Reduction modulo $\Lambda$ then returns $X_n$
when $X_n<\Lambda$.
The argument for $Y_n$ is the same.
\end{proof}

\begin{theorem}[Least admissible common base]\label{thm:general-base}
The least admissible common base for \eqref{eq:general-x} and
\eqref{eq:general-y} is
\begin{equation}\label{eq:base}
  b_0=2X_1(X_1+1)-1.
\end{equation}
Both formulas hold for all $n\ge1$ and all integers $b\ge b_0$.
For every integer $2X_1\le b<b_0$, the pair fails at $n=1$.
\end{theorem}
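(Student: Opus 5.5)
The plan is to reduce everything to the explicit tails of Lemma~\ref{lem:tail}, which have closed forms. Put $\Lambda=b^n$ and $D_\Lambda=\Lambda^2-2X_1\Lambda+1$; the latter is positive as soon as $\Lambda\ge2X_1$. Summing the recurrence tail gives
\[
  T_n=\frac{X_{n+1}\Lambda-X_n}{D_\Lambda},\qquad
  U_n=\frac{Y_{n+1}\Lambda-Y_n}{D_\Lambda}.
\]
One checks this by multiplying the tail by $D_\Lambda$: the recurrence $X_{k+2}=2X_1X_{k+1}-X_k$ telescopes all but two terms, and the same happens for $Y$. At $n=1$ this specializes to
\[
  T_1=\frac{(2X_1^2-1)b-X_1}{D_b},\qquad U_1=\frac{Y_1(2X_1b-1)}{D_b}.
\]
The condition $T_1<1$ is equivalent to $b(b-b_0)+X_1+1>0$. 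This holds at $b=b_0$, and it fails at $b=b_0-1$, where the left side equals $X_1+2-b_0<0$. Since this quadratic in $b$ has one root below $2X_1$, on $b\ge2X_1$ we get $T_1<1\iff b\ge b_0$. This identifies $b_0$ as the threshold.

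\textbf{Sufficiency.} Fix $b\ge b_0$ and $n\ge1$. Since $X_n<\eps^n<(2X_1)^n\le\Lambda$ and $Y_n<X_n$, the reduction step of Lemma~\ref{lem:tail} is harmless, so it remains to prove $T_n<1$ and $U_n<1$. I would use $X_{n+1}<2X_1X_n$, which follows from the recurrence, together with $\Lambda\ge b\ge b_0$. These reduce $T_n<1$ to an inequality of the form $X_n\bigl((2X_1-1)\Lambda+1\bigr)<\Lambda^2-2X_1\Lambda+1$. For $n=1$ this is exactly the computation above. For $n\ge2$, I would bound $X_n<(2X_1)^{n-1}X_1$ and compare with $\Lambda=b^n\ge b_0^{\,n-1}b$. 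The slack is large because $b_0\gg2X_1$, so this should be a routine estimate. For $U_n$, the inequality $Y_{n+1}\Lambda-Y_n<X_{n+1}\Lambda-X_n$, which follows from $X_{n+1}-Y_{n+1}>X_n-Y_n$, gives $U_n<T_n<1$.

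\textbf{Necessity at $n=1$.} Let $2X_1\le b<b_0$, so $D_b\ge1$ and $T_1\ge1$. The $X$-floor is $b+X_1+\fl{T_1}$, so success forces $\fl{T_1}=kb$ with $k\ge1$. The $Y$-floor is $Y_1+\fl{U_1}$, so success forces $\fl{U_1}=lb$ with $l\ge0$. Eliminating $b$ from the two closed forms gives the integer relation
\[
  D_b\bigl(2X_1Y_1T_1-(2X_1^2-1)U_1\bigr)=-Y_1.
\]
Substituting $T_1=kb+\theta$ and $U_1=lb+\varphi$ with $\theta,\varphi\in[0,1)$ yields
\[
  b\,m=-\frac{Y_1}{D_b}-2X_1Y_1\theta+(2X_1^2-1)\varphi,
  \qquad m=2X_1Y_1k-(2X_1^2-1)l\in\Z .
\]
The right side has absolute value below $2X_1^2$. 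So when $b\ge2X_1^2$ we get $m=0$. This is where coprimality enters. A common prime factor of $Y_1$ and $2X_1^2-1=X_1^2+dY_1^2$ would divide $X_1^2$, which is impossible since $\gcd(X_1,Y_1)=1$; and $2X_1^2-1$ is odd. Hence $\gcd(2X_1Y_1,2X_1^2-1)=1$, so $m=0$ forces $(2X_1^2-1)\mid k$, and $k\ge1$ then gives $T_1\ge(2X_1^2-1)b$. That contradicts the closed form, because $T_1<(2X_1^2-1)b/D_b$ and $D_b>1$ once $b>2X_1$. This settles the range $\max(2X_1^2,2X_1+1)\le b<b_0$.

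\textbf{Main obstacle.} The hard part is the remaining small bases $2X_1\le b<2X_1^2$. There $m$ need not vanish, and $T_1$ can be huge; at $b=2X_1$ one has $D_b=1$ and $T_1$ is an integer. First I would treat $b=2X_1$ directly. Here $\fl{T_1}=(2X_1^2-1)2X_1-X_1$ is congruent to $-X_1$ modulo $b$, not $0$, so the $X$-formula returns $0\ne X_1$. For $2X_1<b<2X_1^2$, I would keep the relation $bm=\cdots$ with $|m|$ small. Then $m$ is determined by the size of $\varphi$ and $\theta$, and I would combine $m\equiv-(2X_1^2-1)l \pmod{2X_1Y_1}$ with the explicit sizes $kb\approx T_1$ and $lb\approx U_1$ read off from the closed forms. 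The aim is to show the pair $(k,l)$ cannot be consistent. If this case analysis becomes delicate, the fallback is a direct congruence argument: compute $\fl{T_1}\bmod b$ and $\fl{U_1}\bmod b$ from $D_b\equiv1\pmod b$, and show they cannot both vanish.
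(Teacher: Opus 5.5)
Your threshold computation ($T_1<1\iff b\ge b_0$ for $b\ge 2X_1$), your sufficiency strategy, and your direct check at $b=2X_1$ agree with the paper. There is one slip in sufficiency. From $X_{n+1}<2X_1X_n$ you only get $X_{n+1}\Lambda-X_n<X_n(2X_1\Lambda-1)$, not the inequality you display. At $n=1$, $b=b_0$ this crude bound is too weak: the condition $X_1(2X_1b-1)\le D_b$ becomes $b^2-(2X_1^2+2X_1)b+X_1+1\ge0$, which fails at $b_0$. So the case $n=1$ must use the exact value $X_2=2X_1^2-1$, as in your threshold computation. For $n\ge2$ the slack you describe is genuine.

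The real gap is necessity for $2X_1<b<2X_1^2$, which is most of $[2X_1,b_0)$. For $d=7$ your argument covers only $b=16$ and $128\le b\le142$, and leaves $17\le b\le127$ open. Your elimination uses $2X_1Y_1T_1-(2X_1^2-1)U_1$. Its large coefficients give an error of size about $2X_1^2$ against a modulus of only $b$, so below $2X_1^2$ the integer $m$ can be nonzero. Neither of the strategies you sketch for this range is carried out.

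The missing idea is to use the small-coefficient relation $Y_1X_2-X_1Y_2=-Y_1$ and to keep the numerators exact rather than passing to fractional parts. Write $a=X_1$, $y=Y_1$, $s_b=b-2a$, and $D_b=bs_b+1$. The conditions $\lfloor T_1\rfloor=kb$ and $\lfloor U_1\rfloor=lb$ read $0\le bH-a<D_b$ and $0\le bJ-y<D_b$ for the integers $H=2a^2-1-kD_b$ and $J=2ay-lD_b$. When $b\ge 2a+1$ this forces $1\le H,J\le s_b$. Then $yH-aJ+y=(al-yk)D_b$, and the left side lies in $[2y-as_b,\,y(s_b+1)-a]\subset(-D_b,D_b)$, since $as_b<bs_b<D_b$. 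Hence $al=yk$, and coprimality gives $a\mid k$. But $k\le T_1/b<2a^2/D_b<a$ because $D_b>2a$, so $k=0$, contradicting $T_1\ge1$.

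This works because the modulus $D_b\approx bs_b$ exceeds the error $\approx as_b$ for every $b\ge 2a+1$; your version lacks exactly that. It also makes your separate $m=0$ argument for $b\ge 2X_1^2$ unnecessary.
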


\begin{proof}
Write $a=X_1$ and $y=Y_1$ in this proof. Then $a\ge2$,
$1\le y<a$, and $\gcd(a,y)=1$.

First consider $b=b_0$ and $n=1$. Summing the generating-function
tail gives
\[
  T_1=\frac{b(2a^2-1)-a}{b^2-2ab+1}.
\]
For $b>2a$, the inequality $T_1<1$ is equivalent to
\begin{equation}\label{eq:base-quadratic}
  f(b)=b^2-(2a^2+2a-1)b+(a+1)>0.
\end{equation}
At $b_0$, one has $f(b_0)=a+1>0$.
Since $a<b_0$ and $U_1<T_1$, Lemma~\ref{lem:tail} proves both
formulas at $n=1$.

For $n\ge2$, the recurrence implies $X_m<(2a)^m$ for $m\ge1$.
The inequality
\[
  b_0^2-(2a)^3-2a=4a^4-6a+1>0
\]
starts an induction giving
\[
  b_0^n>(2a)^{n+1}+2a\qquad(n\ge2).
\]
To pass to the next value of $n$, multiply by $b_0>2a$.
Consequently
\[
  T_n<\sum_{i\ge1}(2a)^{n+i}b_0^{-ni}
      =\frac{(2a)^{n+1}}{b_0^n-2a}<1.
\]
Also $X_n<(2a)^n<b_0^n$ and $U_n<T_n$.
Lemma~\ref{lem:tail} proves both extraction formulas.
Increasing $b$ decreases the tails and increases the modulus, proving
sufficiency for $b\ge b_0$.

It remains to exclude smaller bases. At $b=2a$, the denominator for
$n=1$ is one, and the $X$-formula returns zero modulo $b$.
Suppose now $b\ge2a+1$, and set $s_b=b-2a$ and $D_b=bs_b+1$.
The two tails at $n=1$ are
\[
  T=\frac{b(2a^2-1)-a}{D_b},\qquad
  U=\frac{y(2ab-1)}{D_b}.
\]
The floors before reduction modulo $b$ are
$b+a+\fl T$ and $y+\fl U$.
If both residues are correct, then
$\fl T=kb$ and $\fl U=lb$ for nonnegative integers $k,l$.
Put $H_b=2a^2-1-kD_b$ and $J_b=2ay-lD_b$.
The floor inequalities give
\[
  \frac ab\le H_b<s_b+\frac{a+1}{b},\qquad
  \frac yb\le J_b<s_b+\frac{y+1}{b}.
\]
Thus $1\le H_b,J_b\le s_b$.
On the other hand,
\[
  yH_b-aJ_b+y=(al-yk)D_b,
\]
and the left side lies strictly between $-D_b$ and $D_b$, because
\[
  -D_b<2y-as_b\le yH_b-aJ_b+y
       \le y(s_b+1)-a<as_b<D_b.
\]
Hence $al=yk$. Coprimality gives $a\mid k$, whereas
\[
  0\le k\le T/b<2a^2/D_b<a
\]
because $D_b>2a$. Therefore $k=l=0$, and simultaneous correctness
requires $T<1$.

The quadratic in \eqref{eq:base-quadratic} has one root in $(0,1)$
and the other in $(b_0-1,b_0)$: indeed
\[
  f(0)=a+1>0,\qquad
  f(1)=f(b_0-1)=3-a-2a^2<0,\qquad f(b_0)=a+1>0.
\]
For integers $b>2a$, the condition $T<1$ is therefore equivalent
to $b\ge b_0$.
For the remaining bases $2\le b<2a$, the denominator at $n=1$ is
$b(b-2a)+1<0$, so these bases are outside the stated domain.
\end{proof}

For example, $d=7$ has $(X_1,Y_1)=(8,3)$, giving $b_0=143$.
The two formulas at $n=1$ return $(8,3)$ at base $143$.
At base $142$, the first returns $9$ because its tail exceeds one.
The separate coordinate bases found in \cite[Example~38]{PSA25}
are $143$ for $X$ and $64$ for $Y$.

\subsection{The straight-line program in \texorpdfstring{$d,n$}{d,n}}

Choose any of the five complete fundamental-solution programs of
Section~\ref{sec:parameters}. Its output $(X_1,Y_1)$ is the input
to the following program, together with $n\ge1$.

\textbf{G1. Extraction base.}
\[
  h_b=X_1+1,\qquad p_b=X_1h_b,\qquad
  r_b=2p_b,\qquad b=r_b\tsub1.
\]
\emph{Operation count:} $4$.

\textbf{G2. Powers.}
\begin{align*}
  \Lambda&=b^n,& \Lambda_2&=\Lambda\Lambda,& \ell_n&=nn+n,\\
  \Phi&=b^{\ell_n},& \Psi&=\Phi\Lambda.&&
\end{align*}
\emph{Operation count:} $6$.

\textbf{G3. Common denominator.}
\[
  \Omega=X_1\Lambda,\qquad
  \Xi=(\Lambda_2\tsub2\Omega)+1.
\]
\emph{Operation count:} $4$.

\textbf{G4. First coordinate.}
\[
  X_n=\fl{(\Psi\tsub X_1\Phi)/\Xi}\md\Lambda.
\]
\emph{Operation count:} $4$.

\textbf{G5. Second coordinate.}
\[
  Y_n=\fl{Y_1\Phi/\Xi}\md\Lambda.
\]
\emph{Operation count:} $3$.

\emph{Total additional operation count:} $4+6+4+4+3=21$.
All subtractions are exact because $b^n>2X_1$.
Theorem~\ref{thm:general-base} proves the output formulas.
Composing with Program SC gives $115$ operations with elementary
parameters and $124$ with Hua parameters, including construction of
the parameters. Using the HW-C substitution instead gives $119$ and
$128$ operations, respectively.

The largest integer in G1--G5 is $\Psi=b_0^{n^2+2n}$.
Its bit length is $O(n^2\log X_1)$.
Hua's bound gives $O(n^2\sqrt d\log d)$ as a bound in $d,n$.
The additional program therefore has polynomial intermediate bit length
in $n$ and in the length of the fundamental solution.

%%%%%%%%%%%%%%%%%%%%%%%%%%%%%%%%%%%%%%%%%%%%%%%%%%%%%%%%%%%%%%%%%%%%%%%%

\section*{Acknowledgments}

This paper was produced in an AI-assisted workflow. AI assistants helped
explore candidate constructions, draft proofs, and write software for numerical experiments.

\end{document}